\theoremstyle{plain}
\newtheorem{theorem}{Theorem}
\newtheorem{corollary}[theorem]{Corollary}
\theoremstyle{definition}
\theoremstyle{remark}
\newcommand{\eqn}[1]{(\ref{#1})}
\newcommand{\ga}{\gamma}
\newcommand{\et}{\eta}
\newcommand{\si}{\sigma}
\newcommand{\sE}{{\mathcal E}}
\newcommand{\sG}{{\mathcal G}}
\newcommand{\eeq}{\end{equation}}
\newcommand{\beql}[1]{\begin{equation}\label{#1}}
\begin{document}
\theoremstyle{plain}

\begin{center}
{\large\bf Analysis of the gift exchange problem } \\
\vspace*{+.2in}

Moa Apagodu\\
Virginia Commonwealth University\\
Richmond, VA 23284, U.S.A.\\
\texttt{ mapagodu@vcu.edu}\\
\ \\
David Applegate\\
Google, Inc.\\
76 Ninth Ave., New York, NY 10011, U.S.A.\\
\texttt{ david@bcda.us}\\
\ \\
N. J. A. Sloane\footnote{Corresponding author} \\
The OEIS Foundation Inc.\\
11 So. Adelaide Ave., Highland Park, NJ 08904, U.S.A. \\
\texttt{ njasloane@gmail.com}\\
\ \\
Doron Zeilberger\\
Rutgers University (New Brunswick)\\
Piscataway, NJ 08854, U.S.A.\\
\texttt{ zeilberg@math.rutgers.edu}\\
\ \\

\vspace{0.1in}

{\bf Abstract}
\end{center}


In the {\em gift exchange} game
there are $n$ players and $n$ wrapped gifts.
When a player's number is called, that person
can either choose one of the remaining wrapped gifts,
or can ``steal'' a gift from someone who has already unwrapped it, subject
to the restriction that no gift can be stolen more than a total
of $\si$ times.  The problem is to determine the number of ways
that the game can be played out, for given values of $\si$ and $n$.
Formulas and asymptotic expansions are given for these
numbers. 
This work was inspired in part by a 2005 remark by Robert A. Proctor in the {\em On-Line Encyclopedia of Integer Sequences}.

\bigskip\noindent \textbf{Keywords:}
gift swapping, set partitions, restricted Stirling numbers, Bessel polynomials, 
hypergeometric functions, Almkvist-Zeilberger algorithm, Wilf-Zeilberger summation

\bigskip\noindent This is a sequel to the earlier (electronically published) article \cite{AppSlo09},
differing from it in that
there are two additional authors and several new theorems, including
the resolution of most of the conjectures,
and the extensive tables have been omitted.

\vspace{0.1in}

\vspace{0.1in}

\noindent Mathematics Subject Classifications: 05A, 11B37, 33F10


\section{The problem}\label{Sec1}

The following game is sometimes played at parties.
A number $\si$ (typically $1$ or $2$) is fixed in advance.
Each of the $n$ guests brings a wrapped gift, the gifts are
placed on a table (this is the ``pool'' of gifts),
and slips of paper containing the numbers $1$ to $n$
are distributed randomly among the guests.
The host calls out the numbers $1$ through $n$ in order.

When the number you have been given is called, you can either choose one 
of the wrapped (and so unknown) gifts remaining
in the pool, or you can take (or ``steal'') a gift that some earlier
person has unwrapped, subject to the restriction that no gift can be
``stolen'' more than a total of $\si$ times.

If you choose a gift from the pool, you unwrap it and show it
to everyone.
If a person's gift is stolen from them, they immediately get
another turn, and can either take a gift from the pool, or can
steal someone else's gift, subject always to the limit
of $\si$ thefts per gift.
The game ends when someone takes the last ($n$th) gift.

The problem is to determine the number of possible ways
the game can be played out,
for given values of $\si$ and $n$.

For example, if $\si=1$ and $n=3$, with guests $A, B, C$ and 
gifts numbered $1$, $2$, $3$, there are 42 different scenarios,
as follows.  We write $XN$ to indicate that 
guest $X$ took gift $N$ -- it is always clear from the context
whether the gift was stolen or taken from the pool. 
Also, provided we multiply the final answer by 6, we
can assume that the gifts are taken from the pool in 
the order $1, 2, 3$.
There are then seven possibilities:
\begin{align}\label{Eq1}
{} & A1, B2, C3, \nonumber \\
{} & A1, B2, C1, A3, \nonumber \\
{} & A1, B2, C1, A2, B3, \nonumber \\
{} & A1, B2, C2, B3, \nonumber \\
{} & A1, B2, C2, B1, A3, \nonumber \\
{} & A1, B1, A2, C3, \nonumber \\
{} & A1, B1, A2, C2, A3, \nonumber \\
\end{align}
and so the final answer is $6 \cdot 7 = 42$.

If we continue to ignore the factor of $n!$ that arises from
the order in which the gifts are selected from the pool,
the numbers of scenarios for the cases $\si=1$ and $n=1,2,3,4,5$ are
$1,2,7,37,266$, respectively.

We noticed that these five terms matched the beginning of entry
A001515 in \cite{OEIS}, although indexed differently.
The $n$th term of A001515 is defined to be $y_n(1)$,
where $y_n(x)$ is a Bessel polynomial
\cite{Gross78, KF49, RCI},
and for $n=0,1,2,3,4$ the values are indeed $1,2,7,37,266$.
Although there was no suggestion of gift-exchanging in that entry,
one of the comments there (contributed by
Robert A. Proctor on April 18, 2005) 
mentioned set partitions and restricted Stirling numbers, 
and this was enough of a hint to lead us to a solution
of the general problem.

\vspace*{+.2in}
\noindent{\bf Outline of paper.} 
The connection with set partitions is discussed in \S\ref{Sec2},
where Theorem~\ref{th1}  expresses the general solution to
the problem---the function $G_{\si}(n)$---as a sum of restricted Stirling numbers of the second kind.
We find it convenient to introduce our own notation, $E_{\si}(n,k)$ (see \eqn{EqE}),
for the latter numbers.
Theorem \ref{th2} gives various properties of the $E_{\si}(n,k)$,
and Theorem \ref{th3bis} gives an integral representation for $G_{\si}(n)$ that will 
enable us to use the methods of \cite{AlmZeil90} and \cite{ApaZeil06} to find
recurrences.

We will obtain two different linear recurrences for $G_{\si}(n)$,
which we label Type C and Type D.
Type C is somewhat simpler, and has leading coefficient $1$, 
while Type D has lower order (order $\si+1$, in fact,
see Theorem \ref{thsigmaplus1}). 
To see the difference between the two types of recurrence for $G_2(n)$, $G_3(n)$, and $G_4(n)$,
compare \eqn{EqG2d} with \eqn{EqG2e},
\eqn{EqG3e} with \eqn{EqG3D},
and \eqn{EqG4e} with the expression in Table~\ref{TabG4D}
(the first of each pair being the Type C recurrence,
the second the Type D recurrence).

Sections \ref{Sec3}, \ref{Sec4}, and \ref{Sec5} deal with cases $\si=1$ (where
there are connections with Bessel polynomials and hypergeometric functions,
see Theorem \ref{th3}), $\si=2$ (where we introduce the two types of recurrence),
and $\si \ge 3$, respectively.

\vspace*{+.2in}
\noindent{\bf Comments on the rules.} 
(i) If $\si = 1$ then once a gift has been stolen it
can never be stolen again.

\noindent
(ii) If $\si = 2$, and someone steals your gift, 
then if you wish you may immediately steal it back (provided
you hadn't stolen it to begin with), and then
it cannot be stolen again. Retrieving a gift in this way, 
although permitted by a strict interpretation of the rules,
may be prohibited at real parties.

\noindent
(iii) A variation of the game allows the last player
to take {\em any} gift that has been unwrapped,
regardless of how many times it has already been stolen,
as an alternative to taking the last gift from the pool.
This case only requires minor modifications of
the analysis, and we will not consider it here.

\noindent
(iv) We also ignore the complications caused by the fact that 
you brought (and wrapped) one of the gifts yourself, and so are
presumably unlikely to choose it when your number is called.


\section{Connection with partitions of labeled sets}\label{Sec2}

Let $H_{\si}(n)$ be the number of scenarios with $n$ gifts and a limit
of $\si$ steals, for $\si \ge 0, n \ge 1$.
Then $H_{\si}(n)$ is a multiple of $n!$, and we write
$H_{\si}(n) = n! G_{\si}(n-1)$, where in $G_{\si}(n-1)$ we
assume that the gifts are taken from the pool in the order $1,2,\ldots,n$.
We write $n-1$ rather than $n$ as the argument of $G_{\si}$ because the
$n$th gift plays a special (and less important) role. This also
simplifies the statement of Theorem \ref{th1}.

In words, $G_{\si}(n)$ is the number of scenarios when there
are $n+1$ gifts, with a limit of $\si$ steals per gift, and 
the gifts are taken from the pool in the order $1,2,\ldots,n+1$. 

We recall that the Stirling number of the second kind,
$S_2(i,j)$, is the number of partitions of the labeled set $\{1,\ldots,i\}$ into $j$ parts
\cite{Comtet, GKP},
while for $h \ge 1$ the $h$-restricted Stirling number of the second kind,
$S_2^{(h)}(i,j)$, is the number of partitions of $\{1,\ldots,i\}$ into $j$ parts
of size at most $h$ \cite{ChSm1, ChSm2, ChSm3}.
It is understood that parts are always nonempty.

\begin{theorem} \label{th1}
For $\si \ge 0$ and $n \ge 0$,
\beql{Eq2}
G_{\si}(n) = \sum_{k=n}^{(\si+1)n} S_2^{(\si+1)}(k,n)\,.
\eeq
\end{theorem}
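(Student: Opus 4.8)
The plan is to prove \eqn{Eq2} by exhibiting an explicit bijection between the scenarios counted by $G_{\si}(n)$ and the set of all partitions of $\{1,2,\ldots,k\}$ into $n$ nonempty blocks of size at most $\si+1$, as $k$ ranges over $n, n+1, \ldots, (\si+1)n$; summing over $k$ then gives exactly $\sum_{k=n}^{(\si+1)n} S_2^{(\si+1)}(k,n)$.

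First I would go from a scenario to a partition. In any scenario there are $n+1$ gifts, drawn from the pool in the order $1,2,\ldots,n+1$; gift $n+1$ is drawn last and ends the game, while each gift $i$ with $1\le i\le n$ is first drawn and then stolen some number $t_i\in\{0,1,\ldots,\si\}$ of times, so it is ``touched'' by exactly $t_i+1$ moves. List all moves of the game in chronological order (there are $n+1+T$ of them, $T:=\sum_{i=1}^n t_i$, the last being the draw of gift $n+1$), delete that last move, and label the remaining $k:=n+T$ moves $1,\ldots,k$ in chronological order. Letting $B_i$ be the set of labels of the moves touching gift $i$, the $B_1,\ldots,B_n$ form a partition of $\{1,\ldots,k\}$ with $|B_i|=t_i+1\in\{1,\ldots,\si+1\}$, and since $0\le T\le \si n$ we get $n\le k\le(\si+1)n$. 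Note the blocks are automatically indexed so that $\min B_1<\cdots<\min B_n$, because the pool empties in the order $1,\ldots,n+1$: the pool-draw of gift $i$ is the earliest move touching gift $i$, hence equals $\min B_i$, and these draws occur in increasing order of $i$.

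Conversely, from a partition to a scenario: index the blocks $B_1,\ldots,B_n$ by increasing minimum, and replay the game. For $1\le t\le k$, move $t$ handles the unique gift $i$ with $t\in B_i$: it is ``draw gift $i$ from the pool'' if $t=\min B_i$, and otherwise ``steal gift $i$ from its current holder,'' that holder being the player who made move $\max\{s\in B_i: s<t\}$; after move $k$ there is one final move, ``draw gift $n+1$ from the pool,'' ending the game. The acting players are forced too: move $1$ is by player $1$, and for $t\ge2$ (including the final move), move $t$ is by the victim of the steal at move $t-1$ if move $t-1$ was a steal, and by the next not-yet-called player in numerical order if move $t-1$ was a pool-draw. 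One checks by induction on $t$ that this always yields a legal scenario: pool-draws occur in the order $1,\ldots,n+1$ so the required gift is available; a stolen gift is already unwrapped since $t>\min B_i$; no gift is stolen more than $\si$ times since $|B_i|\le\si+1$; the designated ``victim'' currently holds the gift in question and so is never the player currently moving; and players $1,\ldots,n+1$ are each called exactly once. The cleanest way to run the induction is to maintain the invariant that, just before move $t$, the set of players currently holding a gift is exactly the set of players already called or already robbed (and not since re-robbed), and that its size equals the number of pool-draws so far.

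Finally, the two maps are mutually inverse, since in both directions the chronological order of moves, the gift and action at each move, and the acting player are all forced; composing the maps recovers the original object, so the bijection is established and \eqn{Eq2} follows. The main obstacle, and the step requiring real care, is the verification that the partition-to-scenario map always produces a \emph{legal} play --- that the ``extra turn to the victim'' rule, the ``at most $\si$ steals per gift'' rule, and the availability of pool gifts are all respected and that no one is asked to steal from themselves --- which is precisely where the holder/called-player invariant above must be set up and maintained.
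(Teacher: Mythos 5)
Your proof is correct and is essentially the paper's own argument: both encode a scenario by the chronological list of actions on gifts $1,\ldots,n$ (dropping the final draw of gift $n+1$) and read off a partition of $\{1,\ldots,k\}$ into $n$ blocks of size at most $\si+1$, with the pool order forcing the blocks to be indexed by increasing minima. The only difference is that you spell out in more detail the reverse direction's legality check (forced players, victims, steal limits), which the paper treats as immediate reversibility.
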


\begin{proof}
Equation \eqn{Eq2} is an assertion about $G_{\si}(n)$, so we are
now discussing scenarios where there are $n+1$ gifts.
For $\si = 0$, $H_0(n+1) = (n+1)!$, so $G_0(n) = 1$,
in agreement with $S_2^{(1)}(n,n) = 1$.

We may assume therefore that $\si \ge 1$.
Let an ``action'' refer to a player choosing a gift~$\ga$, either by
taking it from the pool or by stealing it from another player.  Since
we are now assuming that the gifts are taken from the pool in order,
$\ga$ determines both the player and whether the action was to take
a gift from the pool or to steal it from another player.  So the
scenario is fully specified
simply by the sequence of $\ga$ values,
recording which gift is chosen at each action.
For example, the scenarios in \eqn{Eq1} are represented
by the sequences
$123$, $1213$, $12123$, $1223$, $12213$, $1123$, $11223$.
Since the game ends as soon as the $(n+1)$st gift is selected, the number
of actions is at least $n+1$ and at most $(\si+1)n+1$.

The sequence of $\ga$ values is therefore a sequence of integers
from $\{1,\ldots,n+1\}$ which begins with $1$, ends with $n+1$,
where each number $i \in \{1,\ldots,n\}$ appears 
at least once and at most $\si+1$ times
and $n+1$ appears just once,
and in which the first $i$ can appear only after $i-1$ has appeared.
Conversely, any sequence with these
properties determines a unique scenario.

Let $k$ denote the length of the sequence with the last entry
(the unique $n+1$) deleted.
We map this shortened sequence to a partition of $[1,\ldots,k]$
into $n$ parts: the first part records the positions of the $1$'s,
the second part records the positions of the $2$'s, $\ldots$,
and the $n$th part records the positions of the $n$'s.
Continuing the example,
for the seven sequences above,
the values of $k$ and the corresponding partitions are as
shown in Table~\ref{Tab1}.

\begin{table}[htb]
\caption{Values of $k$ and partitions corresponding to
the scenarios in \eqn{Eq1}. }\label{Tab1}
$$
\begin{array}{llll}
\mbox{scenario} & \mbox{sequence} & k & \mbox{partition} \\
A1, B2, C3 &    123 &   2 & 1, 2 \\
A1, B2, C1, A3 &        1213 &  3 & 13, 2 \\
A1, B2, C1, A2, B3~ &    12123 & 4 & 13, 24 \\
A1, B2, C2, B3 &        1223 &  3 & 1, 23 \\
A1, B2, C2, B1, A3~ &    12213 & 4 & 14, 23 \\
A1, B1, A2, C3 &        1123 &  3 & 12, 3 \\
A1, B1, A2, C2, A3~ &    11223 & 4 & 12, 34 \\
\end{array}
$$
\end{table}

\noindent The number of such partitions is precisely $S_2^{(\si+1)}(k,n)$.
Since the mapping from sequences to partitions is completely reversible,
the desired result follows.
\end{proof}

\begin{corollary}\label{cor2}
$G_{\si}(n)$ is equal to the number of partitions
of $\{1,2,\ldots,k\}$, for $n \le k \le (\si+1)n$,
into $n$ parts, each of size at most $\si + 1$.
\end{corollary}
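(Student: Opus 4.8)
The plan is to derive Corollary~\ref{cor2} directly from Theorem~\ref{th1} together with the combinatorial definition of the restricted Stirling numbers. First I would recall that, by definition, $S_2^{(\si+1)}(k,n)$ counts the partitions of $\{1,\ldots,k\}$ into exactly $n$ nonempty parts, each part having size at most $\si+1$. Summing this over all admissible values of $k$ from $n$ to $(\si+1)n$, as in \eqn{Eq2}, therefore counts exactly the set of all partitions $P$ such that $P$ is a partition of an initial segment $\{1,\ldots,k\}$ of the positive integers into $n$ parts of size $\le \si+1$, for some $k$ in that range. Hence $G_{\si}(n)$ equals the cardinality of that combined family, which is precisely the statement of the corollary.

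The only points needing a line of justification are the endpoints of the range. For the partition to have $n$ nonempty parts we need $k \ge n$; for every part to have size at most $\si+1$ we need $k \le (\si+1)n$. These are exactly the constraints already appearing as summation limits in \eqn{Eq2}, and outside that range $S_2^{(\si+1)}(k,n) = 0$, so extending or restricting the range changes nothing. I would also note that the families of partitions counted for distinct values of $k$ are automatically disjoint, since the ground set $\{1,\ldots,k\}$ is determined by $k$; this is what lets us pass from the sum in \eqn{Eq2} to a single count.

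There is essentially no obstacle here: the corollary is a restatement of Theorem~\ref{th1} in which the summation over $k$ has been absorbed into the description of the objects being counted, trading a sum of restricted Stirling numbers for a single set of ``partitions of some $\{1,\ldots,k\}$.'' The only thing to be careful about is to phrase the statement so that $k$ is existentially quantified rather than fixed, and to observe that the union over $k$ is disjoint. Thus the proof is a short unwinding of definitions rather than a genuine argument.
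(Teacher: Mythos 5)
Your proposal is correct and matches the paper's (implicit) reasoning: the corollary is just Theorem~\ref{th1} combined with the definition of $S_2^{(\si+1)}(k,n)$, with the sum over $k$ read as a disjoint union over ground sets $\{1,\ldots,k\}$. The paper offers no separate proof precisely because it is this same unwinding of definitions.
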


\vspace*{+.2in}
\noindent{\bf Remark.} The sums $B(i) := \sum_{j}^{} S_2(i,j)$ 
are the classical Bell numbers. The sums $\sum_{j}^{} S_2^{(h)}(i,j)$
also have a long history \cite{MMW, MW55}.
However, the sums
$\sum_{i}^{} S_2^{(h)}(i,j)$
mentioned in \eqn{Eq2}
(where we sum on the first index rather than the second) 
do not seem to have studied before.
Note that the limits in \eqn{Eq2}
are the natural limits on the summand $k$, and
could be omitted.

To simplify the notation, and to put the most important variable first, let
\beql{EqE}
E_{\si}(n,k) :=  S_2^{(\si+1)}(k,n)\,,
\eeq
for $\si \ge 0$, $n \ge 0$, $k \ge 0$.
In words, $E_{\si}(n,k)$ is the number of partitions of $\{1, \ldots, k\}$ into exactly
$n$ parts of sizes in the range $[1, \ldots, \si+1]$.

For $n \ge 0$, $E_{\si}(n,k)$ is nonzero only
for $n \le k \le (\si+1)n$.
To avoid having to worry about negative arguments,
we define $E_{\si}(n,k)$ to be zero if either $n$ or $k$ is negative.
Then the answer to our problem can be written as
\beql{Eq4}
G_{\si}(n) = \sum_{k=n}^{(\si+1)n} E_{\si}(n,k)\,.
\eeq

Stirling numbers of the second kind satisfy many different recurrences
and generating functions \cite[Chap.~V]{Comtet},
and to a lesser extent this is also true of $E_{\si}(n,k)$.
We begin with four general properties.

\begin{theorem}\label{th2}

(i) Suppose $\si \ge 1$, $n \ge 0$, $k \ge 0$. Then $E_{\si}(n,k) = 0$ for $k<n$ or $k>(\si+1)n$,
$E_{\si}(n,k)=1$ if $k=n$,
and otherwise, for $n < k \le (\si + 1)n$,
\beql{EqAAB}
E_{\si}(n,k)
= \sum_{i=0}^{\min\{\si,k-n\}} \binom{k-1}{i} E_{\si}(n-1,k-1-i) \,.
\eeq

(ii) For $\si \ge 0$, $n \ge 0$, $n \le k \le (\si + 1)n$,
\beql{EqAAA}
E_{\si}(n,k)
= \sum_{(\nu_1,\ldots,\nu_{\si+1})}
\frac{k!}{\nu_1! \nu_2! \ldots \nu_{\si+1}! \, 1!^{\nu_1} 2!^{\nu_2} \cdots (\si+1)!^{\nu_{\si+1}} } \,,
\eeq
where the sum is over all $(\si+1)$-tuples of
nonnegative integers $(\nu_1,\ldots,\nu_{\si+1})$ satisfying
\begin{eqnarray}\label{EqAAD}
\nu_1 + \nu_2 + \nu_3 \cdots + \nu_{\si+1} & = & n \,, \nonumber \\
\nu_1 + 2\nu_2 + 3\nu_3 \cdots + (\si+1)\nu_{\si+1} & = & k \,.
\end{eqnarray}

(iii) The numbers $E_{\si}(n,k)$ have the exponential generating functions
\beql{EqAAC1}
\sum_{k=n}^{(\si+1)n} E_{\si}(n,k) \frac{y^k}{k!}
=
\frac{1}{n!}  \left(y+\frac{y^2}{2!}+\cdots + \frac{y^{\si+1}}{(\si+1)!}\right)^n  \,,
\eeq
\beql{EqAAC2}
\sum_{n=0}^{\infty} \sum_{k=n}^{(\si+1)n} E_{\si}(n,k)x^n \frac{y^k}{k!}
=
\exp \left[ x\left(y+\frac{y^2}{2!}+\cdots + \frac{y^{\si+1}}{(\si+1)!}\right) \right] \,.
\eeq

(iv) Suppose $\si \ge 1$. Then $E_{\si}(n,k) = 0$ for $k<n$ or $k>(\si+1)n$,
$E_{\si}(n,k)=1$ if $k=n$,
and otherwise, for $n < k \le (\si + 1)n$,
\beql{EqAAE}
E_{\si}(n,k)
=  \frac{k!}{(k-n)} \sum_{i=1}^{\min\{\si,k-n\}} \frac{(n+1)i-k+n}{(i+1)! (k-i)!} E_{\si}(n,k-i) \,.
\eeq
\end{theorem}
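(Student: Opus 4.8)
The plan is to derive \eqn{EqAAE} from the exponential generating function \eqn{EqAAC1} by converting it into a first-order differential equation and comparing coefficients. First I would dispose of the trivial cases: $E_{\si}(n,k)=0$ for $k<n$ or $k>(\si+1)n$ and $E_{\si}(n,n)=1$ are immediate from the definition (or from part (i)), so the content is the recurrence in the range $n<k\le(\si+1)n$. Set $p(y):=y+y^2/2!+\cdots+y^{\si+1}/(\si+1)!$ and $f_n(y):=\sum_k E_{\si}(n,k)\,y^k/k!$, so that \eqn{EqAAC1} reads $f_n(y)=p(y)^n/n!$. Differentiating $p(y)^n$ and rewriting $p(y)^{n-1}$ as $n\,f_n(y)/p(y)$ (equivalently, logarithmic differentiation) gives
$$p(y)\,f_n'(y)=n\,p'(y)\,f_n(y).$$
This single identity is the engine of the proof: comparing the coefficient of each power of $y$ on the two sides yields one linear relation among the numbers $E_{\si}(n,\cdot)$.

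Next I would expand both sides as power series and read off the coefficient of $y^k$. Using $p(y)=\sum_{j=1}^{\si+1}y^j/j!$ together with $f_n'(y)=\sum_m E_{\si}(n,m+1)\,y^m/m!$ on the left, and $p'(y)=\sum_{j=0}^{\si}y^j/j!$ on the right, this produces
$$\sum_{j=1}^{\si+1}\frac{E_{\si}(n,k-j+1)}{j!\,(k-j)!}\;=\;n\sum_{j=0}^{\si}\frac{E_{\si}(n,k-j)}{j!\,(k-j)!}\,.$$
The unknown $E_{\si}(n,k)$ occurs exactly once on each side: as the $j=1$ term on the left (coefficient $1/(k-1)!$) and as the $j=0$ term on the right (coefficient $n/k!$). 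Collecting these two contributions produces the factor $1/(k-1)!-n/k!=(k-n)/k!$ multiplying $E_{\si}(n,k)$. Reindexing the two remaining sums by $i=j-1$ on the left and by $i=j$ on the right makes both run over $1\le i\le\si$ and both attach to $E_{\si}(n,k-i)$, so they can be merged term by term; the coefficient of $E_{\si}(n,k-i)$ becomes $n/(i!\,(k-i)!)-1/((i+1)!\,(k-i-1)!)$, which over the common denominator $(i+1)!\,(k-i)!$ collapses to $\bigl((n+1)i-k+n\bigr)/\bigl((i+1)!\,(k-i)!\bigr)$. Dividing through by $(k-n)/k!$, legitimate since $k>n$, yields \eqn{EqAAE} with upper summation limit $\si$; and because every term with $i>k-n$ has $k-i<n$, hence $E_{\si}(n,k-i)=0$, the limit may be written as $\min\{\si,k-n\}$ without changing the value.

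I do not expect a genuine obstacle here: the argument is a one-line differential equation followed by routine coefficient extraction. The only points demanding care are bookkeeping — getting the index shift in $f_n'$ right, recognizing that precisely one term on each side carries $E_{\si}(n,k)$, and carrying out the small common-denominator simplification that turns $n/(i!\,(k-i)!)-1/((i+1)!\,(k-i-1)!)$ into the stated numerator $(n+1)i-k+n$. As a sanity check I would verify \eqn{EqAAE} numerically against the tabulated values for $\si=1$ and $\si=2$ before finalizing.
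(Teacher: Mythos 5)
Your argument for part (iv) is correct, and it is in substance the same as the paper's: the paper rewrites \eqn{EqAAC1} in the variable $\eta=k-n$ as \eqn{EqAAC3} and then quotes the J.~C.~P. Miller recurrence for exponentiating a polynomial \cite{Zeil95}, and that recurrence is obtained exactly by your one-line identity $p(y)f_n'(y)=n\,p'(y)f_n(y)$ followed by coefficient comparison. Your write-up simply makes the cited black box explicit and self-contained; the bookkeeping (the single occurrence of $E_{\si}(n,k)$ on each side giving the factor $(k-n)/k!$, the merge of the two shifted sums into the numerator $(n+1)i-k+n$, and the truncation of the upper limit to $\min\{\si,k-n\}$ because $E_{\si}(n,k-i)=0$ when $k-i<n$) all checks out.

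The gap is one of coverage rather than correctness: the statement is all of Theorem~\ref{th2}, but you prove only (iv), and your proof takes (iii) (equation \eqn{EqAAC1}) as given. A complete proposal would also establish (i) by the removal-of-the-part-containing-$k$ argument (the part containing $k$ has size $i+1$ with $\binom{k-1}{i}$ choices for its other elements), (ii) by the standard count of partitions with $\nu_i$ parts of size $i$, and (iii) as a direct consequence of (ii) or (i); these are short, but they are needed both in their own right and as the input to your derivation of (iv).
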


\begin{proof}
(i) This is an analog of the ``vertical'' recurrence
for the Stirling numbers
\cite[Eq.~$\lbrack$3c$\rbrack$,~p.~209]{Comtet}
(note that because \eqn{EqE} involves a transposition, 
``horizontal'' recurrences for $E_{\si}$ correspond to ``vertical'' 
recurrences for Stirling numbers).
The idea of the proof is to take a partition of $[1,\ldots,k]$ into $n$ parts,
remove the part containing $k$, and renumber the
remaining parts.  If the part containing $k$ has size $i+1$,
$0 \le i \le \si$, there are $\binom{k-1}{i}$ possibilities
for the other elements in that part.
(ii) This follows by standard counting 
arguments (cf. \cite[Th.~B,~p.~205]{Comtet}), taking 
$\nu_i$ to be the number of parts of size~$i$ in the partition.
(iii) \eqn{EqAAC1} and \eqn{EqAAC2} are  analogs of the ``vertical'' generating functions
for the Stirling numbers \cite[Eqs.~$\lbrack$2a,2b$\rbrack$,~p.~206]{Comtet},
and follow directly from (i).
(iv) We let $\eta = k-n$ (the ``excess'' of $k$ over $n$), and rewrite \eqn{EqAAC1} as
\beql{EqAAC3}
\sum_{\eta = 0}^{\si n} \frac{n!}{(n+\eta)!}  E_{\si}(n,n+\eta) y^{\eta}
=
 \left(1+\frac{y}{2!}+\cdots + \frac{y^{\si}}{(\si+1)!}\right)^n \,.
\eeq
We now apply the J. C. P. Miller method for exponentiating a polynomial.
In the notation of \cite{Zeil95}, we take 
\beql{EqAAC4}
P(y) =  \left(1+\frac{y}{2!}+\cdots + \frac{y^{\si}}{(\si+1)!}\right) \,,  \quad 
a(n,\eta) = \frac{n!}{(n+\eta)!}  E_{\si}(n,n+\eta)\,.
\eeq
Then \eqn{EqAAE} follows at once from \eqn{EqAAC3} and the main theorem of \cite{Zeil95}.
\end{proof}

From \eqn{Eq4} and  \eqn{EqAAA} we have:

\beql{EqGsa}
G_{\si}(n) ~=~ \sum_{k=n}^{(\si+1)n} \sum_{(\nu_1,\ldots,\nu_{\si+1})}
\frac{k!}{\nu_1! \nu_2! \ldots \nu_{\si+1}! \, 1!^{\nu_1} 2!^{\nu_2} \cdots (\si+1)!^{\nu_{\si+1}} } \,,
\eeq
where the inner sum is over all $(\si+1)$-tuples of
nonnegative integers $(\nu_1,\ldots,\nu_{\si+1})$ satisfying \eqn{EqAAD}.
This may be rewritten as a sum of multinomial coefficients:
\beql{EqGsb}
G_{\si}(n) ~=~
\frac{1}{n!}~
\sum_{i_1=1}^{\si+1}
\sum_{i_2=1}^{\si+1}
\cdots
\sum_{i_n=1}^{\si+1}
\genfrac{(}{)}{0pt}{0}{i_1+i_2+\cdots+i_{n}}{i_1,~i_2,~\cdots,~i_{n}} \,,
\eeq
where $i_r$ is the size of the $r$th part,
or, equivalently, the number of times the $r$th gift is chosen in an action.

Equation \eqn{EqAAA} also leads to an integral representation 
and generating function for $G_{\si}(n)$.
\begin{theorem}\label{th3bis}
(i) For $\si \ge 1$,
\beql{EqAAF}
G_{\si}(n) ~=~ \frac{1}{n!} \int_{0}^{\infty} e^{-y}
    \left( \sum_{j=1}^{\si+1} \frac{y^j} {j!} \right)^n dy \,.
\eeq
(ii) The $G_{\si}(n)$ have ordinary generating function
\begin{eqnarray}\label{EqAAFg}
\sG _{\si} (x) & = & \sum_{n=0}^{\infty} G_{\si}(n) x^n \nonumber \\
       & = & \int_{0}^{\infty} exp \left( -y +
 x  \sum_{j=1}^{\si+1} \frac{y^j} {j!} \right) dy \,.
\end{eqnarray}

\end{theorem}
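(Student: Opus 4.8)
The plan is to read off both formulas from the exponential generating function \eqn{EqAAC1}, using nothing more than the Gamma integral $\int_0^\infty e^{-y}y^k\,dy=k!$ for $k\ge 0$.

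For part (i) I would fix $n\ge 1$ (the case $n=0$ being trivial, since both sides equal $1$), multiply \eqn{EqAAC1} by $e^{-y}$, and integrate over $[0,\infty)$. On the left the sum over $k$ is finite, so term‑by‑term integration is unproblematic; each term $E_\sigma(n,k)\,y^k/k!$ contributes $E_\sigma(n,k)\cdot\frac1{k!}\int_0^\infty e^{-y}y^k\,dy=E_\sigma(n,k)$, and the left side collapses to $\sum_{k=n}^{(\sigma+1)n}E_\sigma(n,k)$, which is $G_\sigma(n)$ by \eqn{Eq4}. Integrating the right side of \eqn{EqAAC1} against $e^{-y}$ produces exactly the right side of \eqn{EqAAF}; the integral converges because $\sum_{j=1}^{\sigma+1}y^j/j!$ is a polynomial in $y$. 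This settles (i).

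For part (ii) I would multiply the identity in (i) by $x^n$, sum over $n\ge 0$, and pull the sum inside the integral: since $\sum_{n\ge 0}\frac{x^n}{n!}\bigl(\sum_{j=1}^{\sigma+1}\frac{y^j}{j!}\bigr)^n=\exp\!\bigl(x\sum_{j=1}^{\sigma+1}\frac{y^j}{j!}\bigr)$, combining with the factor $e^{-y}$ gives the integrand of \eqn{EqAAFg}. The step I would flag as needing care is precisely this interchange of $\sum_n$ and $\int$: it cannot be justified by absolute convergence, because $\sum_{j=1}^{\sigma+1}y^j/j!$ has degree $\sigma+1\ge 2$, so for every $x>0$ the exponent $-y+x\sum_{j=1}^{\sigma+1}y^j/j!$ tends to $+\infty$ as $y\to\infty$ and the integral in \eqn{EqAAFg} diverges; correspondingly $G_\sigma(n)$ grows super‑geometrically (already $G_1(n)=y_n(1)$, the Bessel polynomial at $1$, grows faster than $n!$, and $G_\sigma(n)$ is non‑decreasing in $\sigma$ by \eqn{EqGsb}), so the series $\sum_n G_\sigma(n)x^n$ has radius of convergence $0$. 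The honest reading of \eqn{EqAAFg} is therefore as an identity of \emph{formal} power series in $x$: expanding $\exp$ and integrating term by term (each resulting integral of $e^{-y}$ against a polynomial being convergent), the coefficient of $x^n$ on the right is $\frac1{n!}\int_0^\infty e^{-y}\bigl(\sum_{j=1}^{\sigma+1}y^j/j!\bigr)^n\,dy$, which equals $G_\sigma(n)$ by part (i) — and that is precisely what \eqn{EqAAFg} asserts. (If an analytic statement were wanted instead, the integral does converge for $x\le 0$, where it represents the Borel sum of the divergent series, but this will not be needed here.)
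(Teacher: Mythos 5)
Your proof of (i) is correct and is essentially the paper's own argument: the paper applies Euler's integral $k!=\int_0^\infty e^{-y}y^k\,dy$ to the multinomial expansion \eqn{EqGsa} (coming from \eqn{EqAAA}), while you integrate the equivalent generating-function form \eqn{EqAAC1} term by term --- the same finite computation in slightly different packaging. For (ii) the paper simply says it is an immediate consequence of (i); you go further and correctly note that for $\si\ge 1$ the integral in \eqn{EqAAFg} diverges for $x>0$ and $\sum_n G_{\si}(n)x^n$ has radius of convergence $0$, so the identity must be read as a formal power series (coefficientwise, via (i)), which is a legitimate sharpening of the paper's one-line treatment rather than a gap.
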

\begin{proof}
(i) Using Euler's integral representation for the factorial,
\beql{EqAAG}
k! ~=~ \int_{0}^{\infty} e^{-y} y^k dy \,,
\eeq
the right-hand side of \eqn{EqAAF} is
\begin{eqnarray} \label{EqAAH}
 &  & \frac{1}{n!} \int_{0}^{\infty} e^{-y}
 \sum_{(\nu_1,\ldots,\nu_{\si+1})}
 \frac{n! ~ y^{\nu_1 + 2\nu_2 + 3\nu_3 + \cdots + (\si+1)\nu_{\si+1}}} 
 {\nu_1! \nu_2! \ldots \nu_{\si+1}! \,  2!^{\nu_2} \cdots (\si+1)!^{\nu_{\si+1}} } 
 \nonumber \\
       & = & 
       \sum_{(\nu_1,\ldots,\nu_{\si+1})}
       \frac{ (\nu_1 + 2\nu_2 + \cdots + (\si+1)\nu_{\si+1})! }
        {\nu_1! \nu_2! \ldots \nu_{\si+1}! \,  2!^{\nu_2} \cdots (\si+1)!^{\nu_{\si+1}}}\,, 
\end{eqnarray}
where the sums are over all $(\si+1)$-tuples of
nonnegative integers $(\nu_1,\ldots,\nu_{\si+1})$ satisfying
$\nu_1 + \nu_2 +  \cdots + \nu_{\si+1}  =  n$.
This is the same as the expression for $G_{\si}(n)$ in \eqn{EqGsa}.
(ii) is an immediate consequence of (i).
\end{proof}

The recurrence \eqn{EqAAB} makes it easy to  
compute as many values of $E_{\si}(n,k)$ and $G_{\si}(n)$ as one wishes.
In the rest of the paper we will derive further formulas and recurrences,
and asymptotic estimates for these numbers.


\section{The case \texorpdfstring{$\si = 1$}{sigma = 1}}\label{Sec3}

In the case $\si = 1$, i.e., when a gift can be stolen at most once, from Theorem \ref{th2}
we have $E_1(1,k) = \delta_{1,k}$, and, for $n \ge 2$,
$E_1(n,k)=0$ for $k<n$ and $k>2n$, $E_1(n,n)=1$, and 
\beql{EqE1a}
E_1(n,k) = E_1(n-1,k-1) + (k-1)E_1(n-1,k-2) \,,
\eeq
for $n < k \le 2n$.
We also have the explicit formula
\beql{EqE1b}
E_1(n,k) = \frac{k!}{(2n-k)!~(k-n)!~2^{k-n}} \,,
\eeq
for $n \le k \le 2n$; and the generating function
\beql{EqE1c}
\sum_{n=0}^{\infty} \sum_{k=n}^{2n}~E_{1}(n,k)~x^n \frac{y^k}{k!}
=
e^{x(y+y^2/2)} \,.
\eeq
It follows from \eqn{Eq4} and \eqn{EqE1b} that
\begin{eqnarray}\label{EqG1a}
G_1(n) & = & \sum_{k=n}^{2n} \frac{k!}{(2n-k)!~(k-n)!~2^{k-n}} \nonumber \\
       & = & \sum_{i=0}^{n} \frac{(n+i)!}{(n-i)!~i!~2^i}  \,.
\end{eqnarray}

Equation \eqn{EqG1a} shows that the sequence $G_1(n)$ is indeed given by 
entry A001515 in \cite{OEIS}.
That entry states (mostly without proof) several other properties of these numbers,
taken from various sources, notably Grosswald \cite{Gross78}.
We collect some of these properties in the next theorem.
We recall from \cite{Gross78} that the Bessel polynomial $y_n(z)$ is given
by
\beql{EqBess1}
y_n(z) := \sum_{i=0}^{n} \frac{(n+i)!z^i}{(n-i)!~i!~2^i}  \,.
\eeq
Also ${}_2F_{0}$ and (later) ${}_2F_{1}$ denote hypergeometric functions.
\begin{theorem}\label{th3}
(i) 
\beql{EqG1b}
G_1(n) = y_n(1)\,.
\eeq
(ii) 
\beql{EqG1c}
G_1(n) = {}_2F_{0}\left[ \begin{array}{c}
                            n+1,-n \\
                             -
                            \end{array}
                             ;
                            \begin{array}{c}
                            -\frac{1}{2}
                            \end{array}
                            \right] \,.
\eeq
(iii)
\beql{EqG1d}
G_1(n) = (2n-1)G_1(n-1) + G_1(n-2) \,.
\eeq
for $n \ge 2$, with $G_1(0)=1, G_1(1)=2$.

\noindent
(iv) The $G_1(n)$ have exponential generating function
\beql{EqG1e}
\sE\sG_1(x)  ~:=~ \sum_{n=0}^{\infty} G_1(n)\frac{x^n}{n!} ~=~ \frac{ e^{1-\sqrt{1-2x}}}{\sqrt{1-2x}} \,.
\eeq
(v)
\beql{EqG1f}
G_1(n) ~ \sim ~ \frac{e(2n)!}{n! 2^n} \mbox{~as~} n \rightarrow \infty \,. 
\eeq
\end{theorem}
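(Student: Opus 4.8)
The plan is to read off parts (i) and (ii) directly from the explicit sum \eqn{EqG1a}, to recover (iii) and (iv) from the classical theory of Bessel polynomials (with a self-contained alternative via \eqn{EqAAF}), and to prove the asymptotic statement (v) from \eqn{EqG1a} by a dominated-convergence argument. For (i): the second line of \eqn{EqG1a} is, term by term, the defining sum \eqn{EqBess1} for $y_n(z)$ evaluated at $z=1$, so $G_1(n)=y_n(1)$ with nothing to prove. For (ii): the right-hand side of \eqn{EqG1c} is the terminating series $\sum_{i\ge 0}\frac{(n+1)_i(-n)_i}{i!}\bigl(-\tfrac12\bigr)^i$; since $(-n)_i=0$ for $i>n$, and since $(n+1)_i=(n+i)!/n!$ and $(-n)_i=(-1)^i n!/(n-i)!$ for $0\le i\le n$, the $i$-th term collapses to $\frac{(n+i)!}{(n-i)!\,i!\,2^i}$, i.e.\ exactly the summand in the second line of \eqn{EqG1a}; hence (ii).

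Parts (iii) and (iv) are classical. Grosswald's three-term recurrence $y_n(z)=(2n-1)z\,y_{n-1}(z)+y_{n-2}(z)$ \cite{Gross78}, together with (i), gives \eqn{EqG1d}, and his generating function for the $y_n(z)$ specializes at $z=1$ to \eqn{EqG1e}. For a self-contained route: applying the Almkvist--Zeilberger algorithm \cite{AlmZeil90} to the integrand $e^{-y}(y+y^2/2)^n$ of \eqn{EqAAF} (with $\si=1$) produces a linear recurrence which, rewritten in terms of $G_1(n)$, is \eqn{EqG1d}; and \eqn{EqG1d} (valid for $n\ge 2$) together with $G_1(0)=1$, $G_1(1)=2$ translates into the initial-value problem $(1-2x)f''-3f'-f=0$, $f(0)=1$, $f'(0)=2$ for $f(x):=\sE\sG_1(x)$, which by two differentiations (writing $u=\sqrt{1-2x}$, $u'=-1/u$) is seen to be solved by $e^{1-u}/u$; this proves \eqn{EqG1e} by uniqueness.

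The real content is (v). Factor out the largest ($i=n$) term $(2n)!/(n!\,2^n)$ in the second line of \eqn{EqG1a} and set $j=n-i$ in the rest:
\[
G_1(n)\,\frac{n!\,2^n}{(2n)!}\;=\;\sum_{j=0}^{n}T_n(j),\qquad
T_n(j):=\frac{2^j}{j!}\prod_{r=0}^{j-1}\frac{n-r}{2n-r}\,.
\]
Because $\tfrac{n-r}{2n-r}\in(0,\tfrac12]$ for $0\le r<n$, we get the uniform bound $T_n(j)\le\tfrac{2^j}{j!}\cdot 2^{-j}=\tfrac1{j!}$; and for each fixed $j$, $\prod_{r=0}^{j-1}\tfrac{n-r}{2n-r}\to 2^{-j}$ as $n\to\infty$, so $T_n(j)\to\tfrac1{j!}$. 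Since $\sum_{j\ge0}\tfrac1{j!}=e$, Tannery's theorem (dominated convergence for series) yields $\sum_{j=0}^{n}T_n(j)\to e$, which is \eqn{EqG1f}.

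The one step requiring care is the interchange of limit and summation in (v); the rest is routine algebra and calculus. That interchange rests entirely on the uniform bound $T_n(j)\le 1/j!$, which in turn comes from the elementary inequality $\tfrac{n-r}{2n-r}\le\tfrac12$, so that is the step I would take pains to state cleanly. (In (iv), the analogous bookkeeping hazard is keeping the index shifts straight when passing from \eqn{EqG1d} to the differential equation and checking the constant term separately, which is where the base case $G_1(2)=7$ enters.)
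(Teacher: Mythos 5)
Your proposal is correct, and for most parts it follows the paper's own route: (i) and (ii) are read off from \eqn{EqG1a} exactly as in the paper, and your derivation of (iv) — turning \eqn{EqG1d} into the initial-value problem $(1-2x)f''=3f'+f$, $f(0)=1$, $f'(0)=2$ and verifying $e^{1-\sqrt{1-2x}}/\sqrt{1-2x}$ — is the paper's argument verbatim (its equation \eqn{EqG1i} is the same ODE). The differences are in (iii) and (v). For (iii) the paper's primary proof verifies the two-index identity \eqn{EqE1d}, $E_1(n,k)=(2n-1)E_1(n-1,k-2)+E_1(n-2,k-2)$, directly from the closed form \eqn{EqE1b} and then sums on $k$; you instead invoke Grosswald's three-term recurrence for Bessel polynomials (which the paper itself notes parenthetically as an alternative) or the Almkvist--Zeilberger algorithm applied to \eqn{EqAAF} — all legitimate, though the paper's $E_1$-level route is the most self-contained of the three. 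For (v) you actually give more than the paper does: the paper only sketches the asymptotics by saying the $i=n-1$ and $i=n$ terms dominate (those two terms each equal $(2n)!/(n!\,2^n)$, and the remaining terms supply the missing $e-2$), whereas your normalization $T_n(j)=\frac{2^j}{j!}\prod_{r=0}^{j-1}\frac{n-r}{2n-r}$, the uniform bound $T_n(j)\le 1/j!$ from $\frac{n-r}{2n-r}\le\frac12$, and Tannery's theorem constitute a complete rigorous proof that the normalized sum tends to $e$; this is the cleanest part of your write-up and fills in what the paper leaves as a sketch. One tiny quibble: in (iv) the value $G_1(2)=7$ is not an extra input — it is forced by the ODE at $x=0$ once $f(0)=1$ and $f'(0)=2$ are imposed — so no separate check of a constant term is needed.
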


\begin{proof}
(i) and (ii) are immediate consequences of \eqn{EqG1a}. 
(iii) It is easy to verify from \eqn{EqE1b} that  
\beql{EqE1d}
E_1(n,k) = (2n-1)E_1(n-1,k-2) + E_1(n-2,k-2)\,.
\eeq
Our conventions about negative arguments make it 
unnecessary to put any restrictions on the range over which \eqn{EqE1d}
holds. By summing \eqn{EqE1d} on $k$ we obtain \eqn{EqG1d}.
(Equation \eqn{EqG1d} also follows from one of the recurrences for Bessel polynomials \cite[Eq.~(7),~p.~18]{Gross78}, \cite{KF49}.)
(iv)  By multiplying \eqn{EqG1d} by $x^n/n!$ and summing on $n$ from $2$
to $\infty$ we obtain the differential equation
\beql{EqG1i}
\sE \sG_1''(x) = 3\,  \sE \sG_1'(x) + 2x \, \sE \sG_1''(x) + \sE\sG_1(x)\,.
\eeq
Then the right-hand side of \eqn{EqG1e} is the unique solution of \eqn{EqG1i}
which satisfies $\sE\sG_1(0) = 1$, $\sE\sG_1'(0) = 2$.
(v) This follows from \eqn{EqG1a}, since the terms $i=n-1$
and $i=n$ dominate the sum (see also \cite[Eq.~(1),~p.~124]{Gross78}).
\end{proof}


\section{The case \texorpdfstring{$\si = 2$}{sigma = 2}}\label{Sec4}

In the case when $\si = 1$, i.e., when a gift can be stolen at most once, the problem, as we saw in the
previous section, turned out to be related to the values of Bessel polynomials,
and the principal sequence, $G_1(n)$, had been studied before.
For $\si \ge 2$, we appear to be in new territory---for one thing,
the sequences $G_2(n), G_3(n), \ldots$ were not in \cite{OEIS}.\footnote{$G_2(n), G_3(n), G_4(n)$ are now 
entries A144416, A144508, A144509 in \cite{OEIS}.}

We naturally tried to find analogs of the various parts of Theorem \ref{th3}
that would hold for $\si \ge 2$.
Let us begin with the simplest result, the asymptotic behavior.
This is directly analogous to Theorem \ref{th3}(v).

\begin{theorem}\label{th4}
For fixed $\si \ge 1$,
\beql{EqGsf}
G_{\si}(n) ~ \sim ~  \frac{e((\si+1)n)!}{n! {(\si+1)!}^n} \mbox{~as~} n \rightarrow \infty \,.
\eeq
\end{theorem}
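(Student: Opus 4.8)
The plan is to extract the asymptotics directly from the sum representation \eqn{EqGsa} (equivalently from \eqn{EqAAA}), by showing that the single term corresponding to $\nu_{\si+1} = n$, $\nu_1 = \cdots = \nu_\si = 0$---that is, the partition of $\{1,\ldots,(\si+1)n\}$ into $n$ blocks all of size exactly $\si+1$---dominates, and that the sum of all remaining terms is of strictly smaller order. The dominant term $E_\si(n,(\si+1)n) = \frac{((\si+1)n)!}{n!\,(\si+1)!^n}$ is exactly the quantity appearing on the right of \eqn{EqGsf} divided by $e$, so what must be shown is that $G_\si(n) / E_\si(n,(\si+1)n) \to e$.

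First I would set up the ratio. Writing $k = (\si+1)n - \eta$ for the excess-type variable $\eta \ge 0$, and using \eqn{EqAAA}, one can try to compare each tuple $(\nu_1,\ldots,\nu_{\si+1})$ with the top tuple. A cleaner route, which I would actually pursue, is to use the integral representation \eqn{EqAAF}:
\[
n!\, G_\si(n) ~=~ \int_0^\infty e^{-y} \Bigl( \sum_{j=1}^{\si+1} \frac{y^j}{j!} \Bigr)^n \, dy \,.
\]
For large $n$ the integrand is sharply peaked near $y = \infty$ (the polynomial inside grows like $y^{\si+1}/(\si+1)!$), so I would substitute $y \to$ something that rescales the peak, or more simply expand $\bigl(\sum_{j} y^j/j!\bigr)^n = \bigl(\frac{y^{\si+1}}{(\si+1)!}\bigr)^n \bigl(1 + \frac{(\si+1)!}{y} + \frac{(\si+1)!}{2!\,y^2} + \cdots\bigr)^n$ and integrate term by term against $e^{-y} y^{(\si+1)n}$, using Euler's integral \eqn{EqAAG}. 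The leading term gives $\frac{((\si+1)n)!}{(\si+1)!^n}$; the correction factor, after collecting the lower-order pieces, should sum to $e^{1+o(1)}$ as $n \to \infty$ (the $j=\si$ term contributes the $(\si+1)!/y$ piece whose $n$th power, weighted by the shift in the Gamma integral, limits to $e$; all other terms are $o(1)$ relative to this). Dividing by $n!$ yields \eqn{EqGsf}.

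The step I expect to be the main obstacle is making the "integrate term by term and pass to the limit" rigorous, i.e. controlling the tail of the binomial-type expansion uniformly in $n$. The subtlety is that although each individual correction term is small, there are $n$-dependent multiplicities (binomial coefficients $\binom{n}{m}$) multiplying products of the small ratios $(\si+1)!/y$, so one is really looking at something like $\sum_m \binom{n}{m} \frac{((\si+1)n - m)!}{((\si+1)n)!} (\si+1)!^m \cdot(\text{bounded})$, and one must check that only $m = O(1)$ contributes to the limit while $m$ growing with $n$ is negligible. I would handle this by a straightforward domination argument: bound $\bigl(1 + \sum_{i\ge 1} c_i y^{-i}\bigr)^n \le \exp\bigl(n \sum_{i\ge1} c_i y^{-i}\bigr)$ on the region $y \ge y_0$ (where the series converges), split the integral at $y = y_0 = y_0(n)$ chosen to grow slowly (say $y_0 = n^{2/3}$), show the part $y < y_0$ is exponentially smaller than the claimed main term, and on $y \ge y_0$ use dominated convergence after the substitution $y = (\si+1)n + t\sqrt{(\si+1)n}$ or after pulling out $y^{(\si+1)n}e^{-y}$ and recognizing the remaining average as a Gamma-distributed expectation of $(\si+1)!/y \to 0$, with the $j=\si$ term surviving precisely because its coefficient scales as $n/y \asymp 1/(\si+1)$. (Alternatively, one can avoid the integral entirely and run the same domination directly on the finite sum \eqn{EqGsa}, comparing ratios of consecutive terms; the bookkeeping is a little heavier but elementary.) Everything else---identifying the dominant term, computing its value, and recognizing the limit constant as $e$---is routine once this uniform tail estimate is in place.
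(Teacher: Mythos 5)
Your route is genuinely different from the paper's. The paper argues directly on the finite sum \eqn{EqGsa}: it notes that the two tuples $\{\nu_{\si+1}=n\}$ and $\{\nu_{\si+1}=n-1,\ \nu_{\si}=1\}$ each contribute exactly $((\si+1)n)!/(n!\,(\si+1)!^n)$, and that after dividing by this quantity the tuples with $\nu_{\si}=m$, $\nu_{\si+1}=n-m$ (other $\nu_i=0$) tend to $1/m!$, so the normalized sum converges to $1+1+1/2!+1/3!+\cdots=e$ while all remaining tuples vanish. Your Laplace-type analysis of the integral \eqn{EqAAF}, with the cut at $y_0(n)$ and dominated convergence near the peak $y\approx(\si+1)n$, is exactly the ``alternative'' the paper does not pursue; it is a perfectly viable route and arguably easier to make fully rigorous than the paper's (itself only sketched) term-by-term comparison, which is essentially your parenthetical fallback of comparing ratios within \eqn{EqGsa}.

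Two slips need repair before the plan is sound. First, your opening claim that the single tuple $\nu_{\si+1}=n$ dominates and that ``the sum of all remaining terms is of strictly smaller order'' is false: the remaining terms contribute $(e-1+o(1))$ times that term (indeed the tuple $\nu_{\si+1}=n-1$, $\nu_{\si}=1$ alone contributes exactly the same amount), which is precisely why the ratio tends to $e$ rather than $1$; your operative goal $G_{\si}(n)/E_{\si}(n,(\si+1)n)\to e$ is the correct statement, so the misstatement should simply be dropped. Second, and more consequentially, the factored expansion is wrong: $\sum_{j=1}^{\si+1}y^j/j! \;=\; \frac{y^{\si+1}}{(\si+1)!}\bigl(1+\sum_{i=1}^{\si}\frac{(\si+1)!}{(\si+1-i)!}\,y^{-i}\bigr)$, so the coefficient of $1/y$ is $\si+1$, not $(\si+1)!$. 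With your coefficients the peak value $y\approx(\si+1)n$ would give $\bigl(1+(\si+1)!/y\bigr)^n\to e^{\si!}$, the wrong constant for every $\si\ge 2$; with the correct coefficient one gets $n(\si+1)/y\to 1$, hence the factor $e$, while the $i\ge 2$ terms contribute $n\cdot O(y^{-2})=O(1/n)\to 0$, exactly as your dominated-convergence scheme requires. With these two corrections, and the uniform tail estimate you already identify as the main technical burden, the argument goes through.
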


\begin{proof} (Sketch)
The two terms of \eqn{EqGsa} corresponding to
$ \{ k=(\si+1)n, \nu_{\si+1}=n$, other $\nu_i=0 \} $ and
$ \{ k=(\si+1)n-1, \nu_{\si+1}=n-1, \nu_{\si}=1$, other $\nu_i=0 \} $
dominate the right-hand side of \eqn{EqGsa},
and are both equal to 
$((\si+1)n)!/(n! {(\si+1)!}^n)$.
Dividing the sum by this quantity gives a converging sum,
in which a subset of terms approach $1+1+1/2!+1/3!+...$,
while the others vanish as $n \rightarrow \infty$.
\end{proof}

Concerning Theorem \ref{th3}(i), we do not know if there is a generalization 
of Bessel polynomials whose value gives \eqn{EqGsa} for $\si \ge 2$.

As for Theorem \ref{th3}(ii), there is a relationship with hypergeometric
functions in the case $\si = 2$.  From \eqn{EqAAA} we have
\begin{eqnarray}\label{EqE2a}
E_2(n,k) & = & 
\sum_{c=\max\{0,k-2n\}}^{\lfloor(k-n)/2\rfloor}
\frac{k!}
{(2n-k+c)! (k-n-2c)! c! \, 2^{k-n-c} 3^c} \nonumber \\
& = &
\sum_{c=\max\{0,\et-n\}}^{\lfloor \et/2\rfloor}
\frac{k!}
{(n-\et+c)! (\et-2c)! c! \, 2^{\et-c} 3^c} \,,
\end{eqnarray}
where $\et=k-n$.

\begin{theorem}\label{th5}
(i) Let $\et=k-n$.

\noindent
If $\et \le n$ then
\beql{EqE2b}
E_2(n,k) =
\frac{(n+\et)!}{\et! (n-\et)! 2^{\et} } ~ 
   {}_2F_{1}\left[\begin{array}{c}
                  -\et/2,-\et/2+1/2 \\
                       n-\et+1
                  \end{array}
                   ;
                  \begin{array}{c}
                  \frac{8}{3}
                  \end{array}
                  \right] \,.
\eeq

\noindent
If $\et \ge n$ then
\beql{EqE2c}
E_2(n,k) =
\frac{(\et+n)!}{(2n-\et)! (\et-n)! 2^{n} 3^{\et-n} } ~ 
  {}_2F_{1}\left[ \begin{array}{c}
                  -n+\et/2,-n+\et/2+1/2 \\
                       \et-n+1
                  \end{array}
                   ;
                  \begin{array}{c}
                  \frac{8}{3}
                  \end{array}
                  \right] \,.
\eeq

\noindent
(ii)
\begin{eqnarray}\label{EqG2c}
G_2(n) & = &
\sum_{ \et = 0 }^{n-1} ~
\frac{(n+\et)!}{\et! (n-\et)! 2^{\et} } ~ 
  {}_2F_{1}\left[ \begin{array}{c}
                  -\et/2,-\et/2+1/2 \\
                       n-\et+1
                  \end{array}
                   ;
                  \begin{array}{c}
                  \frac{8}{3}
                  \end{array}
                  \right]  \nonumber \\
& + & \sum_{ \et = n }^{2n} ~
\frac{(n+\et)!}{(2n-\et)! (\et-n)! 2^{n} 3^{\et-n} } ~ 
  {}_2F_{1}\left[ \begin{array}{c}
                  -n+\et/2,-n+\et/2+1/2 \\
                       \et-n+1
                  \end{array}
                   ;
                  \begin{array}{c}
                  \frac{8}{3}
                  \end{array}
                  \right].
\end{eqnarray}
\end{theorem}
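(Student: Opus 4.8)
The plan is to recognize the inner sum in \eqn{EqE2a} as a terminating Gauss hypergeometric series and then assemble the pieces. First I would start from the second expression for $E_2(n,k)$ in \eqn{EqE2a}, namely
\beql{EqPlanStart}
E_2(n,k) = \sum_{c} \frac{k!}{(n-\et+c)!\,(\et-2c)!\,c!\,2^{\et-c}\,3^{c}} \,,
\eeq
and split the analysis according to which boundary governs the lower limit of $c$, i.e.\ whether $\et \le n$ (so $c$ runs from $0$) or $\et \ge n$ (so $c$ runs from $\et-n$). In the case $\et \le n$ I would factor out the $c=0$ term, which is $\frac{k!}{(n-\et)!\,\et!\,2^{\et}} = \frac{(n+\et)!}{(n-\et)!\,\et!\,2^{\et}}$, and form the ratio of consecutive summands: writing $t_c$ for the $c$th term, the ratio $t_{c+1}/t_c$ should simplify to a rational function of $c$ whose numerator factors split the falling binomial $\binom{\et}{2c}$-type quantity into two Pochhammer factors with argument $-\et/2$ and $-\et/2 + 1/2$ (using the duplication identity $(\et-2c)! \to$ product of two half-integer shifted factorials), against a single denominator Pochhammer $(n-\et+1)_c$ and the factor $c!$, with the geometric factor $2^{c}/3^{c}$ combining the $2^{\et-c}$ and $3^{c}$ in the denominator — wait, this gives $(2/3)^c$, not $(8/3)^c$, so the extra $2^{2c}=4^c$ must come from the $\binom{\et}{2c} = \frac{\et!}{(2c)!(\et-2c)!}$ rewriting, since $(2c)! = 4^c\,c!\,(1/2)_c$. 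That bookkeeping is exactly what turns $(2/3)^c$ into $(8/3)^c$ and produces the stated ${}_2F_1$; I would present it as a one-line term-ratio verification.

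For the case $\et \ge n$, I would do the analogous reindexing by setting $c = \et - n + c'$, peeling off the new leading term $c'=0$, which is $\frac{k!}{0!\,(2n-\et)!\,(\et-n)!\,2^{n}\,3^{\et-n}} = \frac{(n+\et)!}{(2n-\et)!\,(\et-n)!\,2^{n}\,3^{\et-n}}$, matching \eqn{EqE2c}; the term ratio computation is structurally identical, with the roles of the parameters shifted, yielding the ${}_2F_1$ with upper parameters $-n+\et/2, -n+\et/2+1/2$ and lower parameter $\et-n+1$, and again the argument $8/3$. Part (ii) is then immediate: by \eqn{Eq4}, $G_2(n) = \sum_{k=n}^{3n} E_2(n,k) = \sum_{\et=0}^{2n} E_2(n,n+\et)$, and we split the range at $\et = n$, substituting \eqn{EqE2b} for $0 \le \et \le n-1$ and \eqn{EqE2c} for $n \le \et \le 2n$ (the case $\et = n$ can be placed in either sum, both formulas agreeing there, and I would note this to justify the choice of split point), which is exactly \eqn{EqG2c}.

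The main obstacle I expect is purely the combinatorial algebra of the term-ratio calculation — in particular getting the duplication/halving identities $(\et-2c)! $, $(2c)! = 4^c c!\,(1/2)_c$ lined up correctly so that the spurious powers of $2$ land where they should and the argument genuinely comes out $8/3$ rather than $2/3$ or $4/3$. There is also a minor parity subtlety: when $\et$ is odd, the upper parameter $-\et/2$ is a negative half-integer and the series still terminates (after $\lfloor \et/2 \rfloor + 1$ terms) because $-\et/2+1/2$ is then a nonpositive integer; I would remark that in all cases the ${}_2F_1$ is a polynomial, so no convergence question arises despite $8/3 > 1$, and the terminating series is well-defined for every integer $\et \ge 0$.
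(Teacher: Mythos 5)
Your proposal is correct and follows the same route as the paper, which proves (i) by converting the factorial sum \eqn{EqE2a} into a terminating ${}_2F_{1}$ via the standard term-ratio/Pochhammer bookkeeping (your ratio computation indeed yields the argument $8/3$ and the stated prefactors in both regimes $\et\le n$ and $\et\ge n$), and obtains (ii) directly from \eqn{Eq4} by splitting the sum at $\et=n$. The only difference is that you spell out the verification the paper leaves to ``standard rules'' (cf.\ \cite{And74}), including the harmless parity point for odd $\et$.
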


\begin{proof}
(i) follows from \eqn{EqE2a} using the standard rules for converting sums of
products of factorials to hypergeometric functions (cf. \cite{And74}),
and (ii) follows from \eqn{Eq4}.
\end{proof}

We next give an analog of \eqn{EqE1d} and our first (Type C) recurrence for
$G_2(n)$.

\begin{theorem}\label{th6}
(i)
\begin{align}\label{EqE2d}
 E_2(n,k) &  = (9 n^2 - 9 n + 2) E_2(n-1,k-3)/2
       - 5 E_2(n-1,k-1)/2 \nonumber \\
& +\, (9 n^2 - 36 n + 35) E_2(n-2,k-4)/2
       + 6(n-1) E_2(n-2,k-3)\nonumber \\
& -\, 3 E_2(n-2,k-2)/2 
        +\, 3(2 n-5) E_2(n-3,k-4)
       + 5 E_2(n-3,k-3)/2 \nonumber \\
 &      +\, 5 E_2(n-4,k-4)/2 \, .
\end{align}

\noindent
(ii) 
\begin{align}\label{EqG2d}
G_2(n) & = (9 n^2 - 9 n - 3) G_2(n-1)/2 \nonumber \\
& +\, (9 n^2 - 24 n + 20) G_2(n-2)/2 \nonumber \\
& +\, (6 n - 25/2) G_2(n - 3) + 5 G_2(n - 4)/2 \, ,
\end{align}
for $n \ge 4$, with $G_2(0)=1$, $G_2(1) = 3$,
$G_2(2) = 31$, $G_2(3) = 18252$.
\end{theorem}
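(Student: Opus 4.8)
\medskip
\noindent\textbf{Proof strategy.} The plan is to establish the refined identity (i) by creative telescoping, and then to obtain (ii) by summing over $k$, exactly as \eqn{EqG1d} was deduced from \eqn{EqE1d} in the case $\si=1$.

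For (i): in contrast with $\si=1$, where \eqn{EqE1d} fell out of the single-term closed form \eqn{EqE1b} by elementary algebra, here $E_2(n,k)$ is the genuinely nontrivial sum \eqn{EqE2a}, so verifying a linear recurrence among its values is a summation problem. I would present $E_2(n,k)$ in a form amenable to the Almkvist--Zeilberger / Wilf--Zeilberger machinery of \cite{AlmZeil90, ApaZeil06}: either via the coefficient extraction $E_2(n,k) = \frac{k!}{n!}\,[y^k]\,(y+y^2/2+y^3/6)^n$, i.e. the contour integral $E_2(n,k) = \frac{k!}{2\pi i\, n!}\oint (y+y^2/2+y^3/6)^n\,y^{-k-1}\,dy$ (a cousin of the representation in Theorem~\ref{th3bis}), or directly from the finite sum \eqn{EqE2a}, whose summand is a proper hypergeometric term in $(n,k,c)$. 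Running creative telescoping in either formulation produces a linear recurrence operator in $n$ (with shifts in $k$) annihilating $E_2$, together with an explicit rational certificate $R$ making the associated term-by-term identity a routine check; integrating (resp.\ summing) that identity and using that the telescoped boundary terms vanish --- automatic here, since $(y+y^2/2+y^3/6)^n$ vanishes at $y=0$ and $e^{-y}$ forces decay at $\infty$ in the integral form, while the $c$-sum in \eqn{EqE2a} has finite support --- yields the recurrence. Simplifying the output gives \eqn{EqE2d}; which shifts occur and the explicit $n$-polynomial coefficients are then a finite verification, cross-checked against small values of $E_2(n,k)$ computed from \eqn{EqAAB}.

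For (ii): sum \eqn{EqE2d} over all integers $k$. Since every coefficient in \eqn{EqE2d} depends on $n$ only, and $\sum_k E_2(n-j,k-m) = \sum_k E_2(n-j,k) = G_2(n-j)$ by \eqn{Eq4} for every shift $m$, the relation collapses to a fourth-order recurrence for $G_2$; collecting the terms with a common argument and simplifying the coefficient polynomials --- $\tfrac{9n^2-9n+2}{2}-\tfrac52 = \tfrac{9n^2-9n-3}{2}$ for $G_2(n-1)$; $\tfrac{9n^2-36n+35}{2}+6(n-1)-\tfrac32 = \tfrac{9n^2-24n+20}{2}$ for $G_2(n-2)$; $3(2n-5)+\tfrac52 = 6n-\tfrac{25}{2}$ for $G_2(n-3)$; and $\tfrac52$ for $G_2(n-4)$ --- gives \eqn{EqG2d}. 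The recurrence is valid whenever all shifted arguments are nonnegative, i.e. for $n\ge 4$, and $G_2(0)=1$, $G_2(1)=3$, $G_2(2)=31$, $G_2(3)=18252$ are computed directly from \eqn{EqAAB} and \eqn{Eq4}. (Alternatively one could apply Almkvist--Zeilberger directly to the integral \eqn{EqAAF} for $G_2$, obtaining a recurrence for $n!\,G_2(n)$ and hence for $G_2(n)$ and bypassing (i); but the route through $E_2$ is closer in spirit to the $\si=1$ treatment.)

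The step I expect to be the main obstacle is the creative-telescoping computation in (i): producing the operator and certificate is a sizeable computer-algebra task, and the elimination that removes all $k$-dependence from the coefficients --- so that $E_2(n,k)$, and then $G_2(n)$, appears alone with coefficient $1$ (the ``Type~C'' shape) --- is where the real work lies. Once the operator and certificate are in hand, the remaining work (the termwise identity, the vanishing of boundary terms, and the coefficient simplifications above) is mechanical.
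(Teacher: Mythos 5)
Your strategy coincides with the paper's own proof: the authors obtain \eqn{EqE2d} by running the Almkvist--Zeilberger ``differentiating under the integral sign'' algorithm on the integral representation \eqn{EqAAF} (their remarks also list the Sister Celine/contiguity-relation routes you mention as alternatives), and they deduce \eqn{EqG2d} by summing on $k$ exactly as you do, so your proposal is essentially the same argument with the coefficient collapse made explicit. One caveat: if you really compute the initial values from \eqn{EqAAB} and \eqn{Eq4} as you claim, you will get $G_2(3)=842$ rather than the printed $18252$ (and $842$ is also what \eqn{EqG2e} yields at $n=3$), so that stated value is a misprint in the theorem rather than a defect in your approach.
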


\begin{proof}
(i) Equation \eqn{EqE2d} follows by applying the Almkvist-Zeilberger technique of
``differentiating under the integral sign'' from \cite{AlmZeil90},
starting with the integral
representation for $G_{2}(n)$ given in \eqn{EqAAF}.
(ii) Eq. \eqn{EqG2d} follows by summing \eqn{EqE2d} on $k$, just as \eqn{EqG1d} followed from \eqn{EqE1d}.
\end{proof}

\vspace*{+.2in}
\noindent{\bf Remarks.}
(i) To apply the technique of ``differentiating under the integral sign'' from \cite{AlmZeil90}, we used the Maple implementation given in \cite{AlmZeil91}.

(ii) The recurrence \eqn{EqE2d} can also be proved
using \eqn{EqE2b}, \eqn{EqE2c}, and
Gauss's contiguity relations for hypergeometric functions
\cite[\S2.1.2]{Erd}, \cite[\S14.7]{WW}.

(iii) A third proof may be obtained using
the method of Sister Mary Celine Fasenmyer,
as described in \S4.1 of \cite{AeqB}.

\vspace*{+.2in}
We discovered \eqn{EqG2d} by experiment, using Theorem
\ref{th4} to suggest the leading term. 
(If $r(n)$ denotes the right-hand side of \eqn{EqGsf},
then $r(n)/r(n-1) = (9 n^2 - 9 n + 2)/2$.)
This is a fourth-order recurrence for $G_2(n)$.
We also discovered (again by experimenting) a third-order recurrence
for $G_2(n)$, although the coefficient of
$G_2(n)$ on the left side is now $n-2$ rather than 1.
\begin{theorem}\label{th39}
For $n \ge 3$,
\begin{align}\label{EqG2e}
(n-2) G_2(n) & =  n (9 n^2-27 n+17) G_2(n-1)/2 \nonumber \\
 & +  (6 n^2-15 n+13/2) G_2(n-2) \nonumber \\
 & +  (5 n-5) G_2(n-3)/2 \, ,
\end{align}
with $G_2(0)=1$, $G_2(1) = 3$,
$G_2(2) = 31$.
\end{theorem}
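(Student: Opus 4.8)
The plan is to obtain the third-order recurrence \eqn{EqG2e} from the integral representation \eqn{EqAAF} specialized to $\si=2$, namely
\[
G_2(n) = \frac{1}{n!} \int_0^\infty e^{-y} P(y)^n \, dy, \qquad P(y) := y + \frac{y^2}{2} + \frac{y^3}{6},
\]
by the Almkvist--Zeilberger ``differentiating under the integral sign'' method from \cite{AlmZeil90}, exactly as was done for the Type C recurrence \eqn{EqE2d} in Theorem \ref{th6}. The key point that produces the \emph{lower} order $\si+1 = 3$ (rather than $4$) is that one should not clear the $1/n!$ in advance: writing $F(n,y) := e^{-y} P(y)^n / n!$, the ratio $F(n,y)/F(n-1,y) = P(y)/n$ is a rational function of $n$ times a fixed function of $y$, and the algorithm is applied to this $F$. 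The AZ algorithm searches for a recurrence
\[
\sum_{j=0}^{J} c_j(n)\, F(n-j, y) \;=\; \frac{\partial}{\partial y}\bigl( R(n,y)\, F(n,y) \bigr)
\]
with $c_j(n)$ polynomial in $n$ and $R(n,y) = Q(n,y)/n!\cdot e^{-y}P(y)^{n-J}$ for a polynomial $Q$; integrating over $y \in [0,\infty)$ kills the right side (the boundary terms vanish since $P(0)=0$ and $e^{-y}\to 0$), leaving $\sum_j c_j(n) G_2(n-j) = 0$, i.e.\ \eqn{EqG2e} after rearranging and identifying $c_0(n) = n-2$.

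Concretely, the steps I would carry out are: (1) Set up $F(n,y)$ as above and feed it to the Maple implementation of \cite{AlmZeil91} (as was done for Theorem~\ref{th6}), requesting the minimal-order recurrence; this returns the $c_j(n)$ for $j=0,1,2,3$ together with the certificate $R(n,y)$. (2) Verify the output independently by checking the ``telescoping'' identity: substitute the claimed $c_j$ and $R$ into $\sum_j c_j(n) F(n-j,y) = \partial_y(R(n,y)F(n,y))$ and confirm it holds as a rational-function identity in $y$ (this is a finite, mechanical polynomial verification once one multiplies through by $n!\, e^{y}/(e^{-y}P(y)^{n-3})$, reducing everything to a polynomial identity in $y$ with coefficients polynomial in $n$). (3) Integrate both sides over $[0,\infty)$, noting $R(n,0)F(n,0)=0$ and $R(n,y)F(n,y)\to 0$ as $y\to\infty$ (since $R$ is polynomial in $y$ times $e^{-y}P(y)^{n-3}$ and the exponential dominates), so the right side integrates to $0$; this yields $\sum_{j=0}^3 c_j(n) G_2(n-j) = 0$. (4) Rearrange to the stated form \eqn{EqG2e}, and confirm the initial values $G_2(0)=1$, $G_2(1)=3$, $G_2(2)=31$ (computable directly from \eqn{EqGsb} or the recurrence \eqn{EqAAB}) by checking that the recurrence is consistent at small $n$ — in particular the recurrence is valid for $n \ge 3$, and the first few $G_2(n)$ generated agree with those in Theorem~\ref{th6}.

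The main obstacle is not conceptual but verification-theoretic: the AZ algorithm is a black box, so the honest part of the proof is step (2), confirming that the specific rational certificate $R(n,y)$ does telescope. This requires producing $R(n,y)$ explicitly and then checking a polynomial identity of moderately large degree in $y$ and in $n$; I expect $R(n,y)$ to have the shape $e^{-y}P(y)^{n-3}Q(n,y)/n!$ with $Q$ a polynomial of low degree in $y$ (degree roughly $2\si = 4$ in $y$), and the identity to collapse after routine algebra. As a cross-check one can, following Remark (ii) after Theorem~\ref{th6}, rederive \eqn{EqG2e} from \eqn{EqE2b}--\eqn{EqE2c} via Gauss's contiguous relations, or simply verify the recurrence numerically against a table of $G_2(n)$ for $n$ up to, say, $20$; agreement there plus the telescoping certificate constitutes a complete proof. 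One subtlety worth flagging: one must check that the order cannot be reduced below $3$ (so that $c_0(n)=n-2\not\equiv 0$ genuinely appears), which follows from the general structure in Theorem~\ref{thsigmaplus1} but can also be seen directly from the fact that a recurrence of order $\le 2$ with polynomial coefficients would force a contradiction with the known growth rate and initial values of $G_2(n)$.
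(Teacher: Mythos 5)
Your proposal is correct and is essentially the paper's own proof: the paper establishes \eqn{EqG2e} by applying the Almkvist--Zeilberger creative-telescoping algorithm (via the Maple implementation of \cite{AlmZeil91}) to the integral representation \eqn{EqAAF}, exactly the recurrence-in-$n$, telescope-in-$y$ scheme you describe, with the certificate check and vanishing boundary terms implicit. Note only that what you describe (a difference operator in $n$ certified by a $\partial_y$-telescoper) is what the paper calls ``differencing under the integral sign'' (the discrete version, \cite[\S7]{AlmZeil90}, giving the Type D recurrence), not the continuous ``differentiating'' version used for \eqn{EqE2d}; also, keeping or clearing the $1/n!$ does not affect the order, as the paper's proof of Theorem~\ref{thsigmaplus1} works with $n!\,G_{\si}(n)$.
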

In view of \eqn{EqG2c}, \eqn{EqG2e} is equivalent to a complicated
identity involving hypergeometric functions.
\begin{proof}
To prove \eqn{EqG2e} we apply the Almkvist-Zeilberger technique of ``differencing under
the integral sign'' from \cite[\S7]{AlmZeil90},
again starting from the integral 
representation for $G_{2}(n)$ given
in \eqn{EqAAF}.
(We again used the Maple implementation of the technique from \cite{AlmZeil91}.)
\end{proof}

Since \eqn{EqE2d} uses the {\em continuous}
version of the algorithm in \cite{AlmZeil90},
and \eqn{EqG2e} the {\em discrete} version,
we refer to these recurrences as Types C  and D respectively.
When $\si=1$, both Types C and D produce \eqn{EqG1d}.

Finally, if we need more terms of the asymptotic expansion than 
are given in Theorem \ref{th4}
(or a computer-certified proof of it!),
we can apply the Poincar\'{e}--Birkhoff--Trjitzinsky method
as presented in \cite{WimpZeil85} to the 
Type C recurrence \eqn{EqG2d}.
This works for any $\si$; for $\si=2$, we find that
\beql{EqAsympt2}
G_2(n) ~ \sim ~ \frac{e\,(3n)!}{n! \, 6^n}
\, \Big(1 ~+~ \frac{1}{3 \, n} ~+~ \frac{1}{54 \, n^2}  - \frac{8}{81 \, n^3} - \cdots \Big) \,.
\eeq


\section{The cases \texorpdfstring{$\si \ge 3$}{sigma at least 3}}\label{Sec5}

In this final section we discuss the cases $\si \ge 3$, mostly concentrating
on $G_{\si}(n)$ rather than $E_{\si}(n,k)$.
We have not found any connections between these
$G_{\si}(n)$ and generalized Bessel polynomials or hypergeometric functions.

Let us first prove that recurrences for $G_{\si}(n)$ and $E_{\si}(n,k)$ always exist.
This follows from the Wilf--Zeilberger ``Fundamental Theorem
for multivariate sums'' (\cite[Theorem~4.5.1]{AeqB}, \cite{WZ92a}).

\begin{theorem}\label{th7}
\noindent
(i) For $\si \ge 1$,
there is a number $\delta \ge 0$ such that
$E_{\si}(n,k)$ satisfies a recurrence of the form
\beql{EqEsWZ}
\sum_{i=0}^{\delta} \sum_{j=0}^{\delta} C_{i,j}^{(E)}(n) E_{\si}(n-i,k-j) =0
\mbox{~for~all~} n \ge \delta \,,
\eeq
where the coefficients $C_{i,j}^{(E)}(n)$ are polynomials
in $n$ with coefficients depending on $i$ and~$j$.

\noindent
(ii) For $\si \ge 1$,
there is a number $\delta \ge 0$ such that
$G_{\si}(n)$ satisfies a recurrence of the form
\beql{EqGsWZ}
\sum_{i=0}^{\delta} C_{i}^{(G)}(n) G_{\si}(n-i)=0
\mbox{~for~all~} n \ge \delta \,,
\eeq
where the coefficients $C_{i}^{(G)}(n)$ are polynomials
in $n$ with coefficients depending on $i$.
\end{theorem}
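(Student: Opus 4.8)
The plan is to derive both recurrences from the integral representation in Theorem~\ref{th3bis}(i) together with the Almkvist--Zeilberger theory, exactly as was done for $\si=2$ in Theorems~\ref{th6} and~\ref{th39}, and then to invoke the general holonomic machinery to guarantee that the procedure terminates. First I would recall that $G_{\si}(n) = \frac{1}{n!}\int_0^\infty e^{-y} P_\si(y)^n\, dy$ where $P_\si(y) = \sum_{j=1}^{\si+1} y^j/j!$. The integrand $F(n,y) = e^{-y} P_\si(y)^n / n!$ is \emph{hyperexponential} in $y$ (its logarithmic derivative in $y$ is a rational function of $n$ and $y$) and \emph{hypergeometric} in $n$ (the ratio $F(n+1,y)/F(n,y) = P_\si(y)/(n+1)$ is rational in $n$ and $y$). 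By the Almkvist--Zeilberger theorem \cite{AlmZeil90}, such an integrand admits a recurrence $\sum_{i=0}^{\delta} c_i(n) F(n-i,y) = \frac{\partial}{\partial y} R(n,y)$ where the $c_i(n)$ are polynomials and $R(n,y)$ is hyperexponential; integrating over $[0,\infty)$ kills the right-hand side (the boundary terms vanish because $e^{-y}$ dominates and $R$ carries the factor $e^{-y}$), yielding \eqn{EqGsWZ}. Part~(i) follows analogously by treating $E_{\si}(n,k)$ via its generating function \eqn{EqAAC2}, or more directly by extracting $E_{\si}(n,k)$ as the coefficient $[y^k/k!]$ of $\frac{1}{n!}P_\si(y)^n$: this is a bivariate sum (over the compositions in \eqn{EqAAA}), so the Wilf--Zeilberger fundamental theorem for multivariate sums applies.

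The cleaner and more uniform route for both parts is to cite \cite[Theorem~4.5.1]{AeqB} directly. The quantity $E_{\si}(n,k)$ is, by \eqn{EqAAA}, a \emph{proper hypergeometric} multisum: it is a sum over the lattice points $(\nu_1,\dots,\nu_{\si+1})$ satisfying the linear system \eqn{EqAAD} of a term that is a product and quotient of factorials whose arguments are integer-linear in $(n,k,\nu_1,\dots,\nu_{\si+1})$, times the obvious powers $1!^{\nu_1}\cdots(\si+1)!^{\nu_{\si+1}}$ which are exponential in the summation variables. Hence the Wilf--Zeilberger theory guarantees that $E_{\si}(n,k)$ satisfies a linear recurrence with polynomial coefficients of the form \eqn{EqEsWZ} for some finite $\delta$ depending only on $\si$. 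Summing that recurrence over $k$ (using \eqn{Eq4}, and noting the boundary contributions are zero because $E_{\si}(n,k)=0$ outside $n\le k\le(\si+1)n$) collapses it to a pure recurrence in $n$ for $G_{\si}(n)$, which after clearing denominators is of the form \eqn{EqGsWZ}; this proves~(ii) from~(i). Alternatively one applies the multisum theorem to $G_{\si}(n)$ directly via \eqn{EqGsb}.

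The key steps, in order, are: (1) write $E_{\si}(n,k)$ as the proper-hypergeometric multisum \eqn{EqAAA}; (2) quote the WZ fundamental theorem \cite[Th.~4.5.1]{AeqB}, \cite{WZ92a} to conclude that a recurrence of shape \eqn{EqEsWZ} exists with some finite order $\delta = \delta(\si)$; (3) sum over $k$ and check that all telescoped/boundary terms vanish identically, obtaining \eqn{EqGsWZ}; and (4) observe that the same conclusion can be reached through the Almkvist--Zeilberger integral algorithm applied to \eqn{EqAAF}, which is in fact how the explicit recurrences for $\si=2$ were produced. I expect the only genuine obstacle to be the bookkeeping in step~(3): one must be slightly careful that summing a recurrence valid only for a range of $k$ really does produce a clean identity in $n$, since the coefficients $C_{i,j}^{(E)}(n)$ depend on $j$ and the support window $[n,(\si+1)n]$ shifts with $n$. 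This is handled by the convention (already in force in the paper) that $E_{\si}(n,k)=0$ for $k$ outside its natural range, so that the shifted summands line up and the total sum over all $k\in\mathbb{Z}$ is finite and telescopes correctly; no inequality constraints survive. Everything else is a direct appeal to established algorithmic results, so the proof is essentially a citation with a short verification.
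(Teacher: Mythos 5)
Your main route is essentially the paper's own proof: the authors likewise invoke the Wilf--Zeilberger Fundamental Theorem for multisums, after using the two linear constraints \eqn{EqAAD} to eliminate $\nu_1,\nu_2$ and exhibit $E_{\si}(n,k)$ (up to a factor $2^{-n}$, illustrated for $\si=3$) as a free sum of a proper hypergeometric term, and then obtain (ii) by summing the resulting recurrence on $k$. The only difference is presentational---you leave the elimination of the constrained variables implicit and add an Almkvist--Zeilberger integral alternative for (ii), which matches how the explicit recurrences elsewhere in the paper were actually computed.
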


\begin{proof}
(ii) As usual, Eq. \eqn{EqGsWZ} follows by summing \eqn{EqEsWZ} on $k$.
(i) We will use the case $\si = 3$ to illustrate the proof,
the general case being similar. We know from \eqn{EqAAA} that
\beql{EqAAA3}
E_{3}(n,k)
= \sum_{a,b,c,d}
\frac{k!}{a! \, b! \, c! \, d! \, 2^b \, 6^c \, 24^d } \,,
\eeq
where the sum is over all $4$-tuples of
nonnegative integers $(a,b,c,d)$ satisfying
\begin{eqnarray}
a + b + c + d & = & n \,, \nonumber \\
a + 2b + 3c + 4d & = & k \,. \nonumber
\end{eqnarray}
In other words,
\beql{EqAAA4}
\frac{E_{3}(n,k)}{2^n}
= \sum_{c,d}
\frac{k!}{(2n-k+c+2d)! \, (k-n-2c-3d)! \, c! \, d! \, 2^{k-c} \, 3^{c+d} } \,,
\eeq
where now the sum is over all values of $c$ and $d$ for
which the summand is defined.
This summand is a ``holonomic proper-hypergeometric term'',
in the sense of \cite{WZ92a}, and it follows from
the Fundamental Theorem in that paper that 
$E_{3}(n,k)/2^n$ and hence $E_{3}(n,k)$
satisfies a recurrence of the desired form.
Similarly, in the general case, we write the summand in 
$E_{\si}(n,k)$ as a function of $n, k, a_3, \ldots, a_{\si+1}$,
again obtaining a holonomic proper-hypergeometric term.
\end{proof}

The Type C recurrences for $G_3(n)$ and $G_4(n)$ 
(obtained as in \S\ref{Sec4}) are:\footnote{The Types C and D
recurrences for all of $G_1(n)$ through $G_{15}(n)$ are given in two on-line appendices
to this article,
{\tt http://www.math.rutgers.edu/$\sim$zeilberg/tokhniot/oGifts2.txt}
and  
{\tt http://www.math.rutgers.edu/$\sim$zeilberg/tokhniot/oGifts1.txt} respectively.
}

\begin{eqnarray}\label{EqG3e}
 G_3(n)  & = & (32 n^3/3 - 16 n^2 + 10 n/3 - 49/6) G_3(n-1) \nonumber \\
       & + & (48 n^3 - 236 n^2 + 1157 n/3 - 650/3) G_3(n-2) \nonumber \\
       & + & (80 n^3 - 382 n^2 + 641 n - 511) G_3(n-3)/3 \nonumber \\
       & + & (64 n^3/3 - 218 n^2 + 2696 n/3 - 7915/6) G_3(n-4) \nonumber \\
       & + & (56 n^2 - 490 n + 6853/6) G_3(n-5) \nonumber \\
       & + & (56 n - 1703/6) G_3(n-6) \nonumber \\
       & + & 58 G_3(n-7)/3 \,, 
\end{eqnarray}

\footnotesize
\begin{eqnarray}\label{EqG4e}
G_4(n) & = &   (625\,{n}^{4}-1250\,{n}^{3}+625\,{n}^{2}-300\,n-543) G_4(n-1)/24 \nonumber \\
& + & (27500\,{n}^{4}-184000\,{n}^{3}+447500\,{n}^{2}-473075\,n+180003) G_4(n-2) /72 \nonumber \\
& + & (336875\,{n}^{4}-2546500\,{n}^{3}+7679675\,{n}^{2}-12016800\,n+8048577) G_4(n-3)/864 \nonumber \\
& + & (4833125\,{n}^{4}-77581625\,{n}^{3}+476892700\,{n}^{2}-1304291160\,n+1325759504) G_4(n-4)/2592 \nonumber \\
& + & (1700625\,{n}^{4}+28316750\,{n}^{3}-605973450\,{n}^{2}+3123850885\,n-5033477363) G_4(n-5)/7776 \nonumber \\
& + & (2670000\,{n}^{4}-64380500\,{n}^{3}+704577200\,{n}^{2}-3610058445\,n+6818722190)  G_4(n-6)/7776 \nonumber \\
& + & (2002500\,{n}^{4}-51976000\,{n}^{3}+517392050\,{n}^{2}-2252744530\,n+3561765885) G_4(n-7)/7776 \nonumber \\
& + & (9078000\,{n}^{3}-209915400\,{n}^{2}+1640828980\,n-4301927039) G_4(n-8)/7776 \nonumber \\
& + & (5393400\,{n}^{2}-91413680\,n+390747263) G_4(n-9)/2592 \nonumber \\
& + & (1593990\,n-14522219) G_4(n-11)/972 \nonumber \\
& + & 310343 G_4(n-11)/648 \,.
\end{eqnarray}
\normalsize

We also found analogous recurrences for the $E_{\si}(n,k)$, which we omit.

The Type D recurrence for $G_3(n)$ is given in \eqn{EqG3D},
and that for $G_4(n)$ can be seen in Table \ref{TabG4D}. 
 
\begin{eqnarray}\label{EqG3D}
  && 3\,(64\,{n}^{3}-360\,{n}^{2}+762\,n-547)\, G_3 \left( n \right) ~= \nonumber \\
   && \left( 2048\,{n}^{6}-14592\,{n}^{5}+42304\,{n}^{4}-58384\,{n}^{3}  + 36972\,{n}^{2}-10888\,n+2381 \right) G_{{3}} \left( n-1 \right) \nonumber \\
  &&+ \left( 5376\,{n}^{5}-35616\,{n}^{4}+92200\,{n}^{3}-110788\,{n}^{2} + 54186\,n-5365 \right) G_{{3}} \left( n-2 \right)  \nonumber \\
  && +\left( 5376\,{n}^{4}-27552\,{n}^{3}+52616\,{n}^{2}-45620\,n+10514 \right) G_{{3}}
 \left( n-3 \right) \nonumber \\
   &&+ \left( 1856\,{n}^{3}-4872\,{n}^{2}+6786\,n-
2349 \right) G_{{3}} \left( n-4 \right) 
\end{eqnarray}

\begin{table}[htb]
\caption{The Type D recurrence for $G_4(n)$..}
\label{TabG4D}

\footnotesize
\begin{align}
  & 72 \, (16687500\,{n}^{6}-209150000\,{n}^{5}+1070031875\,{n}^{4}-3019737375
\,{n}^{3}+4945130775\,{n}^{2} \nonumber \\
  & -4329975510\,n+1513065336) \,  G_4 \left( n \right) ~= \nonumber \\
 & \nonumber \\
 &  ( 31289062500\,{n}^{10}-454734375000\,{n}^{9}+2821911328125\,{n}^{8}-10081802109375\,{n}^{7} \nonumber \\
  & +22781118187500\,{n}^{6}-33185759803125\,{n}^{5}+30632133843750\,{n}^{4}-17235043672875\,{n}^{3} \nonumber \\
  & + 5483042423925\,{n}^{2}-700627863570\,n-57348303408 )\, G_{{4}} \left( n-1 \right)  \nonumber \\
 & \nonumber \\
 & + ( 141843750000\,{n}^{9}-1990540625000\, {n}^{8}+11724386562500\,{n}^{7}  -39078979093750\,{n}^{6} \nonumber \\
 & + 81505745228125\,{n}^{5}-107513140175625\,{n}^{4}+84513872351000\,{n}^{3}-33225357802500\,{n}^{2} \nonumber \\
 & +2737777538500\,n+1197797898465 ) \, G_{{4}} \left( n-2 \right) \nonumber \\
 & \nonumber \\ 
 & + ( 252815625000\,{n}^{8}-3168622500000\,{n}^{7}+16127100406250\,{n}^{6}-45548278450000\,{n}^{5} \nonumber \\
 & +80090937641250\,{n}^{4}-86115353337125\,{n}^{3}+47445915625400\,{n}^{2}-6693899844450\,n \nonumber \\
 & -2609871946015 ) \, G_{{4}} \left( n-3 \right) \nonumber \\
 & \nonumber \\
  & + ( 199248750000\,{n}^{7}-1999129125000\,{n}^{6}+
7757225837500\,{n}^{5}-16990061751250\,{n}^{4} \nonumber \\
 & +23960112482875\,{n}^{3}-17664322875275\,{n}^{2}+4396729093865\,n+802753105180 ) \, G_{
{4}} \left( n-4 \right) \nonumber \\ 
 & \nonumber \\
 &  + ( 58189312500\,{n}^{6}-380170175000\, {n}^{5}+957510585625\,{n}^{4}-1734293884125\,{n}^{3} \nonumber \\
 & + 1621184408800\,{n}^{2}-573345040895\,n-48634580313 ) \, G_{{4}} \left( n-5 \right) \,. \nonumber
\end{align}
\normalsize
\end{table}

Equation \eqn{EqG3D} and the expression in Table~\ref{TabG4D}  are linear recurrences of orders $4$ and $5$,
respectively, and the Type D recurrences for $G_5(n)$ and $G_6(n)$ are similarly of orders $6$ and $7$.
An analogous property holds for all $\si$.

\begin{theorem}\label{thsigmaplus1}
For $\si \ge 1$, the Type D recurrence for $G_{\si}(n)$ has order at most $\si +1$.
\end{theorem}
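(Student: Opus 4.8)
The plan is to apply the discrete (``differencing under the integral sign'') version of the Almkvist--Zeilberger algorithm \cite{AlmZeil90} --- the version that produces the Type~D recurrence --- to the integral representation \eqn{EqAAF}, and to bound the order of its output by a parameter count. Write $P(y):=\sum_{j=1}^{\si+1}y^j/j!$, so that $F(n):=n!\,G_{\si}(n)=\int_{0}^{\infty}e^{-y}P(y)^n\,dy$; the only facts about $P$ we shall need are $\deg_y P=\si+1$ and $P(0)=0$. Run with increasing order $d=0,1,2,\dots$, this algorithm terminates at the first $d$ for which there exist polynomials $c_0(n),\dots,c_d(n)$, not all zero, and a function $H(n,y)$ rational in $y$, with $\sum_{i=0}^{d}c_i(n)\,e^{-y}P(y)^{n-i}=\partial_y H(n,y)$; integrating such an identity over $y\in[0,\infty)$ kills $H$ at both endpoints and yields $\sum_{i=0}^{d}c_i(n)\,F(n-i)=0$. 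So it is enough to produce such data with $d=\si+1$.

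Because $e^{-y}P(y)^{n-i}=P(y)^{\si+1-i}\cdot e^{-y}P(y)^{n-\si-1}$, I look for $H$ of the form $H(n,y)=e^{-y}P(y)^{n-\si}\,T(y)$ with $T$ polynomial in $y$ (and in $n$). Differentiating and cancelling the common factor $e^{-y}P(y)^{n-\si-1}$ turns the telescoping identity into the purely polynomial identity in $y$
\[
\sum_{i=0}^{\si+1}c_i(n)\,P(y)^{\si+1-i}\;=\;(n-\si)\,P'(y)\,T(y)\;+\;P(y)\,(T'(y)-T(y))\,.
\]
Both sides have $y$-degree at most $(\si+1)^2$, and on the right the summand $-P(y)T(y)$ is the unique term of maximal degree; hence, for $T\ne 0$, comparison of top degrees forces $\deg_y T\le(\si+1)^2-(\si+1)=\si(\si+1)$, and we may impose this bound without loss.

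Now view the displayed identity as a homogeneous linear system over the field $\mathbb{C}(n)$, the unknowns being $c_0,\dots,c_{\si+1}$ together with the $\si(\si+1)+1$ coefficients of $T$. Equating the coefficients of $y^0,\dots,y^{(\si+1)^2}$ gives $(\si+1)^2+1$ linear equations in $(\si+2)+(\si(\si+1)+1)$ unknowns, so the number of unknowns exceeds the number of equations by exactly one and the system has a nonzero solution. Any nonzero solution has $(c_0,\dots,c_{\si+1})\ne 0$: if all the $c_i$ vanished, the identity would read $(n-\si)P'T+P(T'-T)=0$, forcing the unique top-degree term $-PT$ to vanish and hence $T=0$. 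Clearing denominators in $n$, we obtain polynomials $c_i(n)$, not all zero, and a polynomial $T(n,y)$, satisfying the telescoping identity with $H(n,y)=e^{-y}P(y)^{n-\si}T(n,y)$.

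It remains to integrate: $\sum_{i=0}^{\si+1}c_i(n)\,e^{-y}P(y)^{n-i}=\partial_y\bigl(e^{-y}P(y)^{n-\si}T(n,y)\bigr)$, and the primitive vanishes at $\infty$ (because of $e^{-y}$) and at $0$ (because $P(0)=0$ and $n-\si\ge 1$), so $\sum_{i=0}^{\si+1}c_i(n)\,F(n-i)=0$ for all $n\ge\si+1$. Substituting $F(m)=m!\,G_{\si}(m)$ and dividing by $(n-\si-1)!$ multiplies each $c_i(n)$ by a polynomial in $n$ of degree $\si+1-i$, producing a linear recurrence for $G_{\si}(n)$ of order at most $\si+1$ with polynomial coefficients; since the algorithm is run with increasing order and we have exhibited a solution at order $\si+1$, the recurrence it returns --- the Type~D recurrence --- has order at most $\si+1$. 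The step needing care is the \emph{a priori} degree bound $\deg_y T\le\si(\si+1)$ that makes the parameter count finite (resting on $-PT$ strictly dominating the rest of the right-hand side in degree), together with the appeal to $P(0)=0$, which is what both keeps the certificate polynomial --- no power of $P$ in a denominator --- and makes the boundary term at $0$ disappear. The rest is routine bookkeeping.
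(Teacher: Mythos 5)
Your proof is correct, but it takes a more self-contained route than the paper, which disposes of the theorem in one line by invoking Theorem~AZ of \cite{ApaZeil06}: the authors simply substitute $POL(n,x)=1$, $a(x)=-x$, $b(x)=S_p(x)=t(x)=1$, $s(x)=\sum_{j=1}^{\si+1}x^j/j!$ into that theorem and read off the order bound $L=\si+1$ for $\widehat{G_{\si}}(n)=n!\,G_{\si}(n)$ given by \eqn{EqAAF}. What you have done is re-derive the relevant special case of that black box from scratch: the polynomial certificate ansatz $H=e^{-y}P(y)^{n-\si}T(y)$, the reduction to the polynomial identity $\sum_{i}c_i(n)P^{\si+1-i}=(n-\si)P'T+P(T'-T)$, the degree bound $\deg_y T\le\si(\si+1)$ forced by the strictly dominant term $-PT$, and the count of $(\si+1)^2+2$ unknowns against $(\si+1)^2+1$ equations over $\mathbb{C}(n)$ are exactly the ingredients of the Apagodu--Zeilberger order estimate, specialized to this integrand. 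The points that need care are all handled: the dominance of $-PT$ both justifies the degree bound and rules out the all-$c_i$-zero degenerate solution; $P(0)=0$ together with $n-\si\ge 1$ kills the boundary term at the origin (and $e^{-y}$ kills it at infinity); and dividing the resulting relation for $n!\,G_{\si}(n)$ by $(n-\si-1)!$ multiplies $c_i(n)$ by the falling product $(n-i)(n-i-1)\cdots(n-\si)$, so the coefficients stay polynomial. The trade-off is the usual one: the paper's citation is shorter and automatically places the certificate inside the algorithm's actual search space, while your version makes the theorem independent of \cite{ApaZeil06} and makes visible exactly where the bound $\si+1$ comes from, namely the excess of one unknown over the number of coefficient equations.
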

\begin{proof}
This follows from Theorem~AZ of \cite{ApaZeil06}.
To see this we apply that theorem to the quantity
$$
\widehat{G_{\si}}(n) ~:=~ n! \, G_{\si}(n) ~=~
\int_{0}^{\infty} e^{-x}
    \left( \sum_{j=1}^{\si+1} \frac{x^j} {j!} \right)^n dx \,.
$$
That is, for the expression $F(n,x)$ in the statement of
Theorem AZ we take $POL(n,x)=1$, $a(x)=-x$, $b(x)=1$, $S_p(x)=1$,
$s(x) = \sum_{j=1}^{\si+1} x^j/j!$, and $t(x)=1$. The conclusion of the theorem is that
$\widehat{G_{\si}}(n)$ (and hence $G_{\si}(n)$) satisfies a linear recurrence of order at most $L = \si+1$. 
\end{proof}

\end{document}